\title[Groups of type $\E_6$ and $\E_7$  and Related Torsors]{Groups of type $\E_6$ and $\E_7$ over Rings via Brown Algebras and Related Torsors}
\author[S.\ Alsaody]{Seidon Alsaody}
\address{Department of Mathematics, Uppsala University, P.\ O.\ Box 480, 751 06 Uppsala, Sweden. seidon.alsaody@math.uu.se}
\newtheorem{Thm}{Theorem}[section]
\newtheorem{Prp}[Thm]{Proposition}
\newtheorem{Cor}[Thm]{Corollary}
\newtheorem{Lma}[Thm]{Lemma} 
\theoremstyle{definition}
\newtheorem{Def}[Thm]{Definition}
\newtheorem{Rk}[Thm]{Remark}
\newtheorem{Ex}[Thm]{Example}
\numberwithin{equation}{section}
\newcommand{\C}{\mathbb{C}}
\newcommand{\Z}{\mathbb{Z}}
\newcommand{\Q}{\mathbb{Q}}
\newcommand{\D}{\mathrm{D}}
\newcommand{\E}{\mathrm{E}}
\newcommand{\F}{\mathrm{F}}
\newcommand{\G}{\mathrm{G}}
\newcommand{\diag}{\mathrm{diag}}
\newcommand{\Id}{\mathrm{Id}}
\newcommand{\bAut}{\mathbf{Aut}}
\newcommand{\bGL}{\mathbf{GL}}
\newcommand{\Spec}{\mathrm{Spec}}
\newcommand{\bX}{\mathbf{X}}
\newcommand{\bS}{\mathbf{S}}
\newcommand{\bE}{\mathbf{E}}
\newcommand{\bmu}{\bm{\mu}}
\newcommand{\bIsom}{\mathbf{Isom}}
\newcommand{\bInv}{\mathbf{Inv}}
\begin{document}

\begin{abstract} We study structurable algebras and their associated Freudenthal triple systems over commutative rings. The automorphism groups of these triple systems are exceptional groups of type $\E_7$, and we realize groups of type $\E_6$ as centralizers. When 6 is invertible, we further give a geometric description of  homogeneous spaces of type $\E_7/\E_6$, and show that they parametrize principal isotopes of Brown algebras. As opposed to the situation over fields or local rings, we show that such isotopes may be non-isomorphic.

\end{abstract}
\maketitle

\noindent{\bf Keywords:} Brown algebra, structurable algebra, Freudenthal triple system, exceptional group, $\E_6$, $\E_7$, torsor, homogeneous space, isotope.

\noindent{\bf MSC: } 17C40, 20G41, 20G35.
\bigskip

\setlength\epigraphwidth{.72\textwidth}
\epigraph{``Les sym\'etries que l'on attribue \`a un objet d\'ependent de fa\c con essentielle des qualit\'es de l'objet que l'on d\'ecide de prendre en compte \`a l'exclusion de toute autre. Sans ce processus d'abstraction, aucune sym\'etrie (parfaite) n'est possible.''}{Jacques Tits, \textit{Sym\'etries} \cite{Tit}}

\section{Introduction}

Exceptional groups arise as symmetry groups of certain exceptional, nonassociative algebras. This is true for Lie groups, for algebraic groups over fields and more generally for affine group schemes over rings. Many of these algebras possess different kind of symmetries and, consequently, serve to represent different groups. Thus groups of type $\G_2$ are automorphism groups of octonion algebras, which are endowed with a quadratic form whose isometry group involves type $\D_4$. Groups of type $\F_4$ are automorphism groups of Albert algebras, whose cubic norms have isometry groups of type $\E_6$. 

This interplay between groups and algebras goes in both directions: on the one hand, the algebras afford concrete descriptions of certain torsors under exceptional groups; on the other hand, the yoga of torsors helps determine when two such algebras are isomorphic. This direction was used in \cite{Gil1}, where Gille proved that over rings, octonion algebras may have isometric norms without being isomorphic. In \cite{AG}, the author together with Gille used this interplay to give a concrete parameterization of all octonion algebras having the same norm. The key was the triality phenomenon, which endows groups of type $\D_4$ with the structure of $\G_2$ torsors in a particular way. The approach of \cite{AG} was continued in \cite{Als}, where the author studied a generalization of reduced Albert algebras and their coordinate algebras by realizing certain groups of type $\F_4$ as $\D_4$-torsors, showing that isomorphic algebras may admit non-isometric coordinate algebras. Further,  isotopes of Albert algebras were encoded as twists by an $\F_4$-torsor with total space a group scheme of type $\E_6$. The introduction of \cite{Als} gives a more detailed overview of the programme thus far.

In this paper, we take the next step of this programme of realizing exceptional groups as torsors over smaller exceptional groups. Namely, we study the interplay between groups of type $\E_6$ and $\E_7$. Our main tool will be \emph{Brown algebras}, which we generalize to rings. One way of obtaining Brown algebras starts with an Albert algebra. Brown algebras thus arising are said to be \emph{reduced}. Groups of type $\E_6$ enter as isometry groups of the cubic norms of Albert algebras, and as the identity components of automorphism groups of Brown algebras. 

Groups of type $\E_7$ arise as invariance groups of \emph{Freudenthal triple systems}. These are locally isomorphic (with respect to the fppf-topology) to the Freudenthal triple systems that can be constructed from reduced Brown algebras. In fact, such systems can be assigned to a wider class of Brown algebras. This brings about a number of natural questions:
\begin{enumerate}
 \item Are Brown algebras whose corresponding Freudenthal triple systems coincide, necessarily isomorphic as algebras?
 \item If not, can we parametrize Brown algebras having isomorphic Freudenthal triple systems?
 \item On the geometric side, can we give a concrete description of homogeneous spaces of type $\E_7/\E_6$?
\end{enumerate}

In what follows, we will be able to answer the first question in the negative, provide the parameterization desired in the second question under some invertibility assumptions, and, along the way, give the description desired in the third question.

Freudenthal triple systems (FTS) have been studied over fields of characteristic not 2 or 3; see e.g.\ \cite{Bro} and \cite{Gar}. The datum of an FTS consists, in this setting, of a vector space endowed with a bilinear form and either a trilinear map or a quartic form (either of the two determines the other) satisfying certain axioms. This definition of an FTS can be stated over rings, but problems may arise if 6 is not invertible. In the recent preprint \cite{GPR}, Garibaldi, Petersson and Racine gave a more elaborate definition, inspired by the work \cite{Lur} of Lurie. They showed that their definition gives objects with the right invariance group, and that when $6$ is invertible, one recovers the classical definition. We use the definition of \cite{GPR} and develop the basics of what we need without assuming that 6 is invertible. However, for the main results of the paper, we need to have access to the theory over fields. Since that theory assumes 6 to be invertible, so do we. Indeed, disposing of this assumption would require developing a large corpus of preparatory results over fields of characteristic 2 and 3, a task that is outside the scope of this paper. We hope that this paper leads to, or motivates, such a development in the future.

The paper is structured as follows. In Section 2, we give the basic definitions of Brown algebras and Freudenthal triple systems. In Section 3, we consider group schemes related to reduced Brown algebras. For the invariance group of an FTS arising from such an algebra, we determine its subgroup fixing the identity over arbitrary rings. We moreover determine the automorphism group of reduced Brown algebras when 2 is invertible. After this point, we assume that 6 is invertible. In Section 4, we discuss isotopes of Brown algebras and how they arise from elements of the ``unit sphere'' $\bS_Q$ of the corresponding FTS. Finally, in Section 5, we realize these isotopes as twists by a certain $\E_6$-torsor over $\bS_Q$, with total space a group of type $\E_7$. As an applications, we show that Brown algebras may have non-isomorphic isotopes, and that this even occurs for the split Brown algebra.

\subsection*{Notation} Unless otherwise stated, $R$ denotes an arbitrary unital commutative ring. Rings are assumed associative and commutative, while algebras are in general neither. Both rings and algebras are however always unital, and the unity of an algebra $A$ is denoted by $1$ or $1_A$. An $R$-ring is thus a unital, commutative $R$-algebra. We usually describe a (group) scheme $\bX$ (denoted by a boldface letter) over $R$ by specifying the functor of points; as it is often clear where morphisms are mapped, this amounts to specifying $\bX(S)$ for each $R$-ring $S$.

All unadorned tensor products are over $R$. If $M$ is an $R$-module, we will use the notation $M_S$ for the $S$-module $M\otimes S$, and sometimes denote $m\otimes 1_S$ by $m_S$ for $m\in M$. To avoid confusion, we use superscripts rather than subscripts in most other contexts.

We will often be concerned the $R$-module $R\times R\times A\times A$, where $A$ is an Albert algebra (see Section \ref{Salbert}). We will write this module as
\[\left(\begin{array}{cc}
R&A\\A&R        \end{array}\right)=
\left\{\left(\begin{array}{cc}
r&a\\b&s        \end{array}\right) \mid r,s\in R, a,b\in A\right\}.
\]
We will use the standing notation
\[\begin{array}{llll}x=\left(\begin{array}{cc}
r&a\\b&s        \end{array}\right) &\text{and}& x'=\left(\begin{array}{cc}
r'&a'\\b'&s'        \end{array}\right),\end{array}
                                \]
and we fix the notation
\[\begin{array}{lllll} e=\left(\begin{array}{cc}
1&0\\0&0        \end{array}\right),& f=\left(\begin{array}{cc}
0&0\\0&1        \end{array}\right)&\text{and} & j=\left(\begin{array}{rr}
1&0\\0&-1        \end{array}\right).\end{array}                                                                      \]

\section{Basic Definitions}
\subsection{Albert Algebras}\label{Salbert}
Let $A$ be an Albert algebra over $R$. Recall that this is a cubic Jordan algebra over $R$ whose underlying module is projective of constant rank 27, and for which $A\otimes_Rk$ is simple for every $R$-field $k$. We refer to \cite{Pet} for details on Albert algebras over rings. In particular, any Albert algebra $A$ is endowed with a cubic form $N=N_A:A\to R$ (the norm) and a quadratic map $\sharp:A\to A$ (the adjoint). We denote by $T:A\times A\to R$ the bilinear trace of $N$, given by
\[T(x,y)=N(1,x)N(1,y)-N(1,x,y)\]
where the partial linearization $N(,)$ is quadratic in the first argument and linear in the second, and is given by
\[N(x+ty)=N(x)+tN(x,y)+t^2N(y,x)+t^3N(y)\]
where $t$ is a formal variable; moreover,
\[N(x,y,z)=N(x+y+z)-N(x+y)-N(x+z)-N(y+z)+N(x)+N(y)+N(z)\]
is the full linearization of $N$. It is known that
\[N(x,y,z)=T(x,y\times z),\]
where the cross-product
\[y\times z=(y+z)^\sharp-y^\sharp-z^\sharp\]
is the linearization of the quadratic adjoint. When $S$ is an $R$-ring and $\rho\in\bGL(A)(S)$, we set $\rho^\dagger=(\rho^t)^{-1}$, where the transpose is with respect to the (non-degenerate) bilinear trace $T_S$ of $A_S$. Equivalently,
\[T_S(\rho(x),\rho^\dagger(y))=T_S(x,y)\]
for all $x,y\in A_S$.

\subsection{Brown Algebras}
In this section we let $A$ be an Albert algebra over $R$.
\begin{Def} The \emph{reduced Brown algebra associated to $A$} is the $R$-algebra \[B^A=\left(\begin{array}{cc}
R&A\\A&R        \end{array}\right)\]
with multiplication
\[\left(\begin{array}{cc}
r&a\\b&s        \end{array}\right)\left(\begin{array}{cc}
r'&a'\\b'&s'        \end{array}\right)=
\left(\begin{array}{cc}
rr' + T(a,b')&ra'+s'a+b\times b'\\r'b+sb'+a\times a'& ss'+T(a',b)        \end{array}\right)\]
and involution $^*$ given by
\[\left(\begin{array}{cc}
r&a\\b&s        \end{array}\right)^*=\left(\begin{array}{cc}
s&a\\b&r        \end{array}\right).\]
\end{Def}

If $A=A^s$ is the split Albert algebra over $R$, we will denote $B^A$ by $B^s$ and call it the \emph{split Brown algebra over $R$}. It is obtained by base change from the split Brown algebra $B_0$ over $\Z$, since indeed, $A^s=A_0\otimes_\Z R$, where $A_0$ is the split Albert algebra over $\Z$, whence $B^s=B_0\otimes_\Z R$.

\begin{Def} A \emph{Brown algebra over $R$} is an $R$-algebra with involution $B=(B,^*)$ whose underlying module is projective of rank 56, and that is fppf-locally isomorphic to the split Brown algebra in the sense that $B\otimes S\simeq B^s\otimes S$ for some faithfully flat $R$-ring $S$.
\end{Def}

\begin{Rk} Unless otherwise stated, any (iso)morphism between algebras with involution is assumed to be an (iso)morphism of algebras with involution. 
\end{Rk}

We define the triple product of a Brown algebra $B$ by
\begin{equation}\label{Ebracket}
\{x,y,z\}=(xy^*)z+(zy^*)x-(zx^*)y 
\end{equation}
and use the notation
\begin{equation}\label{EVU}
U_{x,z}y=V_{x,y}z=\{x,y,z\}, 
\end{equation}
with the abbreviation $U_x:=U_{x,x}$.

\begin{Rk}\label{Rcons} Note that the triple product defines a new multiplication $\circ$ on $B$ by 
\[x\circ y=\{x,1,y\}=xy+y(x-x^*).\]
In \cite{AH}, this algebra structure is referred to as the conservative algebra associated to $B$. We will come back to this when discussing isotopes of Brown algebras.
\end{Rk}

\subsection{Freudenthal Triple Systems} 
As explained in the introduction, we will first recall the definition of Freudenthal triple systems from \cite{GPR}. We start with a reduced Brown algebra $B^A$ associated to an Albert algebra $A$ over $R$, and consider the bilinear form $b$ on $B^A$ defined by
\begin{equation}\label{Eb}
b(x,x')=rs'-r's+T(a,b')-T(a',b). 
\end{equation}
This is a non-degenerate bilinear form, since so is $T$ on $A$. Next, a 4-linear form $\Psi_A:B^A\times B^A\times B^A\times B^A\to R$ is defined as follows. Start with
\begin{equation}\label{Eq}
q(x)=4rN(a)+4sN(b)-4T(a^\sharp,b^\sharp)+(rs-T(a,b))^2. 
\end{equation}
If $R=\Z$ and $A=A^s$, let $x_i\in B^A$ and $t_i$ be formal variables for $i=1,2,3,4$. The coefficient of $t_1t_2t_3t_4$ in $q(\sum_i t_ix_i)$ is equal to $2\Theta$ for a 4-linear form $\Theta$, and the 4-linear form $\Psi_A$ is defined by $2\Psi_A$ mapping $(x_1,x_2,x_3,x_4)$ to
\[\Theta(x_1,x_2,x_3,x_4)+b(x_1,x_2)b(x_3,x_4)-b(x_1,x_3)b(x_2,x_4)+b(x_1,x_4)b(x_2,x_3).\]
This is extended to split Brown algebras over arbitrary rings by base change, and to reduced Brown algebras by descent. Thus one obtains, for any Albert algebra $A$ over $R$, a triple $(B^A,\Psi_A,b_A)$. We call this \emph{the Freudenthal triple system associated to $A$}, and denote it by $Q^A$. When $A=A^s$, we call $Q^A$ the \emph{split FTS} and denote it by $Q^s$. Note that $B^A$ is viewed as an $R$-module, with no regard to its structure of a Brown algebra.

\begin{Def} A \emph{Freudenthal triple system} or \emph{FTS} over $R$ is a triple $(B,\Psi,b)$ where $B$ is an $R$-module, $\Psi$ is a 4-linear form on $B$ and $b$ is an alternating bilinear form on $B$, such that $(B\otimes S,\Psi\otimes S,b\otimes S)\simeq(B^A,b_A,\Psi_A)$ for a faithfully flat $R$-ring $S$ and an Albert algebra $A$ over $S$.
\end{Def}

\begin{Rk} Note that, possibly after a further faithfully flat extension $S$, this is equivalent to requiring that $(B\otimes S,\Psi\otimes S,b\otimes S)$ be split.
\end{Rk}

\begin{Rk}\label{Rsix}
If $6\in R^*$, then we remark with \cite{GPR} that from $\Psi$ and $b$ one can recover $\Theta$ and thence $q$ as $q(x)=\tfrac{1}{12}\Theta(x,x,x,x)$. Further, $q$ defines a trilinear form $t$ via its linearization $\Theta$ and the non-degenerate form $b$; namely
\[\Theta(w,x,y,z)=b(w,t(x,y,z)).\]
Thus when $6\in R^*$, then the triple $(B,t,b)$ determines the FTS $(B,\Psi,b)$, and conversely. We shall make frequent use of this.
\end{Rk}

\subsection{Brown algebras and Freudenthal triple systems}\label{SBFTS}
In this subsection, we assume that $6\in R^*$. In \cite{Gar}, FTSs were constructed from any given Brown algebra structure, up to the choice of a skew-symmetric element with invertible square. This construction generalizes, with little modification, to the ring case, as follows.

Let $B$ be a Brown algebra over $R$, and take skew-symmetric elements $z,z'\in B$. Then $z'z=\mu 1_B$ for some $\mu\in R$. (To see this, take a faithfully flat $R$-ring $S$ such that $B_S$ is reduced. Observe that in a reduced Brown algebra, any skew-symmetric element is a multiple of $j$, and thus the product of two such elements is a scalar multiple of the identity. Then $v=z'z$ satisfies
\[v\otimes 1_S=(z'\otimes 1_S)(z\otimes1_S)\in S(1_B\otimes 1_S).\]  Thus the inclusion $R1_B\to R1_B+Rv$ becomes surjective after a faithfully flat extension. Hence $v\in R1_B$ as desired.)

If now $B$ has a skew-symmetric element $z$ with $z^2\in R^*1_B$, then one defines a non-degenerate bilinear form $b_z:B\times B\to R$ by
\[b_z(x,y)=(xy^*-yx^*)z\]
(where we have identified $R$ with $R1_B$), and a trilinear map $t_z:B\times B\times B\to B$ by
\[t_z(x,y,w)=2\{x,zy,w\}-b_z(y,w)x-b_z(y,x)w-b_z(x,w)y.\]

\begin{Ex} If $B=B^A$ and $z=j$, then $b_j$ coincides with the bilinear form $b$ from \eqref{Eb}, and $t_j$ with the trilinear map $t$ from Remark \ref{Rsix}. Thus in this case one recovers $Q^A$.\end{Ex}

Returning to the general case, we need to show that $(B,t_z,b_z)\otimes S \simeq (B^A,t_j,b_j)$ for some faithfully flat $R$-ring $S$ and some Albert algebra $A$ over $S$. We can indeed find such $S$ and $A$ with an isomorphism $\varphi:B_S\to B^A$ of $S$-algebras with involution. Then $\varphi(z\otimes 1_S)=\lambda j$ for some $\lambda\in S$, and $\lambda$ is invertible since $\lambda^2=z^2\otimes 1$. From this one computes that $\varphi$ maps $(B,t_z,b_z)\otimes S$ to $(B^A,\lambda t_j,\lambda b_j)$. Composing $\varphi$ with the map
\[\left(\begin{array}{cc}
r&a\\b&s        \end{array}\right)\mapsto\left(\begin{array}{cc}
\lambda^{-1}r&\lambda a\\b&\lambda^2s        \end{array}\right)\]
(see \cite[Example 16.9]{GPR}) one obtains the desired isomorphism.

We shall call $(B,t_z,b_z)$ the \emph{FTS associated to $B$ and $z$} and denote it by $Q(B,z)$. In case $B=B^A$, one can always choose $z=j$, in which case the above example shows that one retrieves $Q^A$. In general, we do not know which non-reduced Brown algebras admit skew-symmetric elements with invertible square. By \cite{Gar}, this is always the case if $R$ is a field (of characteristic not 2 or 3).

\section{Exceptional Groups}
\subsection{The Invariance Group of an FTS}
Morphisms, isomorphisms and automorphisms of Freudenthal triple systems are defined in the natural way. Given an FTS $Q=(B,\Psi,b)$, the $R$-group scheme of all automorphisms of $Q$ is called the \emph{invariance group of $Q$} and denoted $\bInv(Q)$. The following was proved in \cite{GPR}.

\begin{Prp}\label{PGPR} Let $Q=(B,\Psi,b)$ be an FTS over $R$. Then $\bInv(Q)$ is a simple, simply connected group scheme of type $\E_7$. If $Q=Q^A$ for an Albert algebra $A$ with cubic norm $N$, then the map $\iota: \bIsom(N)\to\bGL(B)$ defined, for each $R$-ring $S$, by
\begin{equation}\label{Eiota}
\iota_S(\rho):=\widehat\rho:\left(\begin{array}{cc}
r&a\\b&s        \end{array}\right)\mapsto\left(\begin{array}{cc}
r&\rho(a)\\\rho^\dagger(b)&s        \end{array}\right)
\end{equation}
defines an injective group homomorphism $\bIsom(N)\to\bInv(Q)$.
\end{Prp}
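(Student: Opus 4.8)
The first assertion is the main result of \cite{GPR}, so the plan is to take the type-$\E_7$ structure of $\bInv(Q)$ as given and concentrate on the homomorphism $\iota$. Since $\rho$ and $\rho^\dagger$ lie in $\bGL(A)(S)$, the map $\widehat\rho$ lies in $\bGL(B)(S)$ and $\iota$ is functorial in $S$. It is a homomorphism because the dagger is one: from $(\rho\sigma)^t=\sigma^t\rho^t$ one gets $(\rho\sigma)^\dagger=\rho^\dagger\sigma^\dagger$, whence $\widehat{\rho\sigma}=\widehat\rho\,\widehat\sigma$. It is injective because $\widehat\rho=\Id$ forces $\rho(a)=a$ for all $a$ by reading off the upper right entry, so $\rho=\Id$. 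The substantial point is therefore that the image lies in $\bInv(Q)=\Stab(b_A,\Psi_A)$, i.e.\ that $\widehat\rho$ preserves both $b_A$ and $\Psi_A$.

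Preservation of $b_A$ is immediate: writing $x,x'$ as in the standing notation and using the defining property $T(\rho(a),\rho^\dagger(b'))=T(a,b')$ of the dagger, one reads off from \eqref{Eb} that $b(\widehat\rho x,\widehat\rho x')=rs'-r's+T(a,b')-T(a',b)=b(x,x')$. For $\Psi_A$ I would reduce to the universal situation. Because $\Psi_A$ and $\iota$ are both defined for the split algebra over $\Z$ and then extended by base change and faithfully flat descent, it suffices to check preservation of $\Psi_A$ in that universal case, where $B^{A}$ is a free $\Z$-module and hence torsion-free. There $\Psi_A$ is built from the quartic $q$ of \eqref{Eq}, the form $\Theta$ (the $t_1t_2t_3t_4$-coefficient of $q(\sum_it_ix_i)$, which equals $2\Theta$), and products of $b$; since $\widehat\rho$ is linear and already preserves $b$, it will preserve $\Psi_A$ as soon as it preserves $q$, the passage through $2\Theta$ and $2\Psi_A$ being harmless over a torsion-free base. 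Evaluating \eqref{Eq} on $\widehat\rho x$ reduces the identity $q(\widehat\rho x)=q(x)$ to the four assertions $N(\rho(a))=N(a)$, $T(\rho(a),\rho^\dagger(b))=T(a,b)$, $N(\rho^\dagger(b))=N(b)$ and $T(\rho(a)^\sharp,\rho^\dagger(b)^\sharp)=T(a^\sharp,b^\sharp)$. The first holds by definition of $\bIsom(N)$ and the second by definition of the dagger.

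The heart of the matter is the behaviour of the adjoint under $\rho$ and $\rho^\dagger$. Comparing the coefficients of $t$ in $N(\rho(x+ty))=N(x+ty)$ gives $N(\rho(x),\rho(y))=N(x,y)$; combined with the standard identity $N(u,v)=T(u^\sharp,v)$ and the non-degeneracy of $T$, this yields the clean, division-free relation
\[\rho(x)^\sharp=\rho^\dagger(x^\sharp)\qquad(x\in A_S),\]
valid over any ring. The obstacle is to obtain the companion relation $\rho^\dagger(b)^\sharp=\rho(b^\sharp)$, which amounts to the stability of $\bIsom(N)$ under the dagger (indeed, the third assertion above \emph{is} this stability statement, and once $\rho^\dagger$ is a norm isometry the displayed relation applied to $\rho^\dagger$, using $(\rho^\dagger)^\dagger=\rho$, gives the companion relation at once).

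I would establish this stability by reducing once more to the universal split case over $\Z$ and thence, by $\Z$-flatness of $\bIsom(N)_\Z$, to the generic fibre over $\Q$, where $6$ is invertible and the stability of the norm-isometry group under the contragredient is classical; the resulting polynomial identity then propagates back to $\Z$ and to every $R$. Granting it, the fourth assertion follows from the two adjoint relations via $T(\rho(a)^\sharp,\rho^\dagger(b)^\sharp)=T(\rho^\dagger(a^\sharp),\rho(b^\sharp))=T(a^\sharp,b^\sharp)$, using $T(\rho^\dagger(u),\rho(v))=T(u,v)$. This completes the preservation of $q$, hence of $\Psi_A$, and with it the proposition. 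I expect the stability of $\bIsom(N)$ under the dagger over arbitrary rings to be the only genuinely delicate step, everything else being either formal or a direct consequence of the division-free relation $\rho(x)^\sharp=\rho^\dagger(x^\sharp)$.
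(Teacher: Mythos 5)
The paper gives no proof of this proposition at all: it is imported wholesale from \cite{GPR} (``The following was proved in \cite{GPR}''), so any genuine argument is by construction a different route. Your proposal is essentially correct. You rightly defer the type-$\E_7$ statement to \cite{GPR} and concentrate on $\iota$; the formal points (functoriality, $(\rho\sigma)^\dagger=\rho^\dagger\sigma^\dagger$, injectivity, preservation of $b$) are all fine, and the reduction of the preservation of $\Psi_A$ to the preservation of the quartic $q$ of \eqref{Eq}, and thence to the four identities, is the right decomposition. The division-free relation $\rho(x)^\sharp=\rho^\dagger(x^\sharp)$, extracted from the trace-adjoint identity $N(x,y)=T(x^\sharp,y)$ (part of the cubic norm structure, valid over any ring) together with non-degeneracy of $T$, is exactly the key tool, and your diagnosis that dagger-stability of $\bIsom(N)$ is the only delicate step is accurate. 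Notably, your spreading-out technique---verify over $\Q$, propagate to $\Z$ by flatness of $\bIsom(N)_\Z$, then to all rings by base change and descent---is the same flatness-over-$\Z$ yoga the paper itself uses in Lemmas \ref{L1} and \ref{L2} (via Seshadri's lemma), so what your route buys is a self-contained verification in the paper's own style, at the cost of relying on the smoothness of $\bIsom(N)$ over $\Z$ from \cite{GPR} (which is established there independently of this proposition, so there is no circularity).

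One imprecision should be repaired. In the $\Psi_A$ step you justify dividing by $2$ by saying the universal case is one ``where $B^{A}$ is a free $\Z$-module and hence torsion-free.'' Torsion-freeness of the module $B_0$ is not the relevant hypothesis: the identity $\Psi(\widehat\rho x_1,x_2,x_3,x_4)=\Psi(x_1,x_2,x_3,x_4)$ must hold for $\rho\in\bIsom(N)(S)$ for \emph{every} $\Z$-ring $S$, and such an $S$ may have $2$-torsion, so one cannot cancel the $2$ in $2\Theta$ pointwise. The correct mechanism---which you in fact invoke one step later for dagger-stability---is to check the identity at the universal point, i.e.\ over the coordinate ring of $\bIsom(N)_\Z$ (extended by polynomial variables for the coordinates of the $x_i$): this ring is torsion-free because $\bIsom(N)_\Z$ is smooth, hence flat, over $\Z$, so an identity between $2\Theta$-values there yields the same identity without the factor $2$, and it then specializes to every $S$-point. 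The same remark applies to passing from $2\Psi_A$ to $\Psi_A$. With that adjustment, and noting that the descent step for general $A$ uses the multiplicativity $\widehat{\varphi\rho\varphi^{-1}}=\widehat\varphi\,\widehat\rho\,\widehat\varphi^{\,-1}$ to transport the split-case statement along a splitting isometry, your proof is complete.
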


\subsection{The Subgroup Fixing the Unity: the Reduced Case}
Let $A$ be an Albert algebra over $R$. The $R$-module $B^A$ contains a distinguished element $\diag(1,1)$, which is the identity element of the Brown algebra $B^A$. The aim of this section is to determine the automorphisms of $Q^A$ fixing the identity.

\begin{Def}\label{DG} The subgroup scheme $\bInv^1(Q^A)$ of $\bInv(Q^A)$ is defined by
\[\bInv^1(Q^A)(S)=\{\varphi\in\bInv(Q^A)(S)\mid \varphi(1_{(B^A)_S})=1_{(B^A)_S}\}\]
for each $R$-ring $S$. 
\end{Def}

We will show that this is the group $\bIsom(N_A)$. This requires a series of lemmata.

\medskip

For the first lemma, let $A$ be an Albert algebra over $R$ and denote by $\Delta$ the full linearization of its cubic norm $N$. Thus as above,
\[\Delta(x,y,z)=T(x,y\times z).\]
Observe that $\Delta(x,x,x)=6N(x)$. If $\bGL(A)^\Delta$ denotes the subgroup of $\bGL(A)$ preserving $\Delta$,  this implies that $\bIsom(N)$ is a subgroup of $\bGL(A)^\Delta$, and if $6\in R^*$, then the two groups coincide. Using a result by Seshadri, this can be extended to $R=\Z$ as follows.

\begin{Lma}\label{L1} Let $A$ be an Albert algebra over $\Z$, with cubic norm $N$ and full linearization $\Delta$. Then $\bGL(A)^\Delta=\bIsom(N)$. \end{Lma}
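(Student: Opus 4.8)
The plan is to realize both $\bIsom(N)$ and $\bGL(A)^\Delta$ as closed subgroup schemes of $\bGL(A)$ and to show that the evident closed immersion between them is an isomorphism by a flatness-and-torsion argument, the only nonformal input being a flatness statement for which Seshadri's result is invoked. Throughout I keep in mind that $\bIsom(N)$ is the smooth, simply connected $\Z$-group of type $\E_6$, which both pins down the expected answer and feeds into the flatness step.

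First I would record the easy inclusion. Preservation of $N$ is the identity $N(\rho(x))=N(x)$, a polynomial identity with $\Z$-coefficients; linearizing it fully gives $\Delta(\rho x,\rho y,\rho z)=\Delta(x,y,z)$. Hence $\bIsom(N)\subseteq\bGL(A)^\Delta$ as closed subgroup schemes of $\bGL(A)$, and it remains to prove the reverse inclusion. Over any torsion-free (equivalently flat) $\Z$-algebra $S$ the two coincide: for $\rho\in\bGL(A)^\Delta(S)$ we have $6N_S(\rho x)=\Delta(\rho x,\rho x,\rho x)=\Delta(x,x,x)=6N_S(x)$, and torsion-freeness of $S$ forces $N_S(\rho x)=N_S(x)$. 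In particular the generic fibres agree: $\bIsom(N)_\Q=(\bGL(A)^\Delta)_\Q$.

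The crux is the flatness of $\bGL(A)^\Delta$ over $\Z$. This group is the scheme-theoretic stabilizer of the point $\Delta$ in the space of symmetric trilinear forms on $A$ under the action of $\bGL(A)$, and it is exactly here that I would invoke Seshadri's theorem to conclude that the stabilizer is $\Z$-flat, the point being to control the fibres over $\mathbb{F}_2$ and $\mathbb{F}_3$, where the relation $\Delta(x,x,x)=6N(x)$ degenerates. Granting flatness, the argument closes formally: let $I\subseteq\mathcal{O}(\bGL(A)^\Delta)$ be the ideal cutting out the closed subscheme $\bIsom(N)$. Since the two subschemes coincide after $\otimes\,\Q$ and $\Q$ is flat over $\Z$, we get $I\otimes_\Z\Q=0$, so $I$ is torsion; flatness makes $\mathcal{O}(\bGL(A)^\Delta)$ torsion-free, so its submodule $I$ vanishes, whence $\bGL(A)^\Delta=\bIsom(N)$.

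The main obstacle is precisely this flatness over $\Z$: the naive pointwise argument breaks down over rings carrying $6$-torsion, so nothing can be extracted fibrewise in characteristics $2$ and $3$ from the linearization alone, and the good behaviour of the stabilizer in those fibres must come from the Seshadri-type geometric reductivity input rather than from a direct analysis of $\Delta$-preserving maps. Once flatness is in hand, the passage from the generic fibre to all of $\Z$ is purely formal, as sketched above.
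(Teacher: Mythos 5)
Your formal skeleton is fine as far as it goes: the generic fibres of $\bIsom(N)$ and $\bGL(A)^\Delta$ do agree (every $\Q$-algebra is torsion-free over $\Z$, so your pointwise cancellation of $6$ applies), and \emph{if} $\bGL(A)^\Delta$ were known to be $\Z$-flat, then the torsion ideal argument would close the proof. The genuine gap is the flatness claim itself, which is the entire content of the lemma and which you assert rather than prove. There is no theorem of Seshadri stating that scheme-theoretic stabilizers of tensors are flat over the base; the result in \cite{Ses} that is relevant here (his Lemma 2, the one the paper actually invokes) is a flat base change statement for \emph{invariants of a module}: for a group scheme $G$ over a noetherian ring $R$ acting on an $R$-module $V$, and $R'$ flat over $R$, one has $V^{G}\otimes_R R'=(V\otimes_R R')^{G_{R'}}$. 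This says nothing about flatness of $G$. In general, stabilizers of points in representations over $\Z$ can fail to be flat, with fibres jumping at bad primes --- here precisely $2$ and $3$, where $\Delta(x,x,x)=6N(x)$ degenerates. Worse, flatness of $\bGL(A)^\Delta$ is a posteriori essentially equivalent to the lemma (once $\bGL(A)^\Delta=\bIsom(N)$, it is smooth), and any direct proof of flatness would require exactly the fibrewise analysis in characteristics $2$ and $3$ that your argument is designed to avoid; so as written, the crux of your proof begs the question.

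The paper's proof avoids saying anything about the geometry of the group $\bGL(A)^\Delta$. It lets this group act by precomposition on the $\Z$-module $V=\mathrm{Pol}(A,\Z)$ of polynomial laws $A\to\Z$, in which $N$ is an element. Your pointwise computation over $\Q$ shows that $N\otimes 1_\Q$ is an invariant element, i.e.\ $N\otimes 1_\Q\in(V\otimes\Q)^{\bGL(A)^\Delta\times\Q}$; Seshadri's Lemma 2 identifies this with $V^{\bGL(A)^\Delta}\otimes\Q$ (here the noetherianity of $\Z$ and flatness of $\Q$ enter), and one then descends by flatness to conclude that $N$ itself lies in $V^{\bGL(A)^\Delta}$, i.e.\ that every point of $\bGL(A)^\Delta$, over every ring, fixes $N$. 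Note the direction of the logic: flatness is used only as a property of the coefficient extension $\Z\to\Q$, never as a property of the stabilizer group scheme. To repair your write-up, replace the unsupported ``the stabilizer is $\Z$-flat'' step by this invariant-theoretic descent of the element $N$; your opening reduction and your observation about torsion-free base rings can be kept as the input showing invariance over $\Q$.
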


\begin{proof} By the above, what needs to be shown is that $\bGL(A)^\Delta$ fixes $N$. Now, $N$ is an element of the $\Z$-module $V=\mathrm{Pol}(A,\Z)$ of polynomial laws from $A$ to $\Z$, in the sense of \cite[13.2]{PR}. An element of $V$ is a natural transformation from $A\otimes -$ to $\Z\otimes -$, both viewed as functors from ($\Z$-)rings to $\Z$-modules. The group $\bGL(A)$, and thus $\bGL(A)^\Delta$, acts on $V$ by precomposition. Then since $6\in\Q^*$ we have \[N\otimes 1_\Q\in(V\otimes \Q)^{\bGL(A)^\Delta\times \Q}=V^{\bGL(A)^\Delta}\otimes \Q\]
where the equality is due to \cite[Lemma 2]{Ses} since $\Q$ is flat over the noetherian ring $\Z$. By flatness, therefore, $\bGL(A)^\Delta$ fixes $N$. \end{proof}

Next we need another application of Seshadri's Lemma to prove, over $\Z$, a result that can be obtained, using the trilinear form of Remark \ref{Rsix}, over any ring where $6$ is invertible.

\begin{Lma}\label{L2} Let $R=\Z$ and $B=B^A$ for an Albert algebra $A$ over $\Z$. Then $e,f\in B^{\bInv^1(Q)}$. 
\end{Lma}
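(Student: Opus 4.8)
The plan is to follow the strategy of Lemma~\ref{L1}: prove the statement over $\Q$, where $6$ is invertible and the trilinear form $t$ of Remark~\ref{Rsix} is available, and then descend to $\Z$ by Seshadri's Lemma. Throughout I regard $B=B^A$ as a $\Z$-module on which $\bInv^1(Q)$ acts through the inclusion $\bInv(Q)\hookrightarrow\bGL(B)$, the goal being to show $e,f\in B^{\bInv^1(Q)}$.

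First I would work over $\Q$. There $\bInv(Q)_\Q$ preserves $t=t_j$ and $b$ (Remark~\ref{Rsix}), and $\bInv^1(Q)_\Q$ in addition fixes $1_B=e+f$. The key computation is to evaluate $t$ at the identity: in $t_j(x,y,w)=2\{x,jy,w\}-b(y,w)x-b(y,x)w-b(x,w)y$ all three $b$-terms vanish at $(1_B,1_B,1_B)$ since $b$ is alternating, and because $j\cdot 1_B=j$, $j^*=-j$ and $1_B^*=1_B$ one gets $\{1_B,j,1_B\}=-3j$, whence \[t(1_B,1_B,1_B)=2\{1_B,j,1_B\}=-6j.\] As $\bInv^1(Q)_\Q$ fixes $1_B$ and preserves $t$, it fixes $-6j$ and hence $j$ (here $6\in\Q^*$); since $2e=1_B+j$ and $2f=1_B-j$, it fixes $e$ and $f$. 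Thus $e,f\in(B\ot\Q)^{\bInv^1(Q)\times\Q}$.

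The remaining, and main, difficulty is the integral descent. By Seshadri's Lemma~\cite[Lemma~2]{Ses}, $\Q$ being flat over the noetherian ring $\Z$, $(B\ot\Q)^{\bInv^1(Q)\times\Q}=B^{\bInv^1(Q)}\ot\Q$, so $e,f\in B^{\bInv^1(Q)}\ot\Q$; as $e,f$ already lie in $B$, I would conclude by flatness that $e,f\in B^{\bInv^1(Q)}$, exactly as in Lemma~\ref{L1}. What makes this step delicate is that over $\Q$ the vectors $e,f$ are recovered from the fixed vector $1_B$ only after inverting $2$ and $6$, and in characteristics $2$ and $3$ this is genuinely impossible --- over $\mathbb{F}_2$ one even has $j=1_B$, so $e,f$ cannot be read off from $1_B$ and $j$ at all, which is why no naive pointwise argument replaces Seshadri's Lemma. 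To control the torsion I would record the integral identity obtained by specializing $2\Psi=\Theta+(\text{products of }b)$ at $(1_B,1_B,1_B,x)$: the $b$-terms again drop out and $2\Psi(1_B,1_B,1_B,x)=\Theta(1_B,1_B,1_B,x)=b(x,t(1_B,1_B,1_B))=6\,b(j,x)$, i.e. \[\Psi(1_B,1_B,1_B,\,\cdot\,)=3\,b(j,\,\cdot\,),\] an identity of integral forms on the free module $B$. Invariance of $\Psi$ and $b$ together with $\varphi(1_B)=1_B$ then forces $3(\varphi(j)-j)=0$ for every $\varphi\in\bInv^1(Q)(S)$, so that $3j$, and hence $6e=3\cdot 1_B+3j$ and $6f=3\cdot 1_B-3j$, lie in $B^{\bInv^1(Q)}$ already over $\Z$; the descent thus comes down to controlling $6$-torsion in $B/B^{\bInv^1(Q)}$, dealt with by the flatness argument of Lemma~\ref{L1}.
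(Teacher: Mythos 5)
Your proof is correct and takes essentially the same route as the paper's: the paper likewise evaluates the invariant trilinear form at the identity to obtain a nonzero multiple of $j=e-f$ (it states $t(1_B,1_B,1_B)=3(e-f)$; your constant $-6j$, computed from the explicit formula for $t_j$, differs only by a harmless normalization and is equally a unit multiple of $j$ over $\Q$), deduces that $\bInv^1(Q)\times\Q$ fixes $e$ and $f$, and then descends to $\Z$ via Seshadri's Lemma and flatness exactly as in Lemma~\ref{L1}. Your supplementary integral identity $\Psi(1_B,1_B,1_B,\,\cdot\,)=3\,b(j,\,\cdot\,)$, showing that $3j$ is fixed already over $\Z$, is a correct minor refinement that the paper's argument does not need, since the final step in both proofs is the same Seshadri descent.
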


\begin{proof} First we show that $\bInv^1(Q)\times \Q$ fixes $e_\Q:=e\otimes 1_\Q$ and $f_\Q:=f\otimes 1_\Q$. Indeed, for any $\Q$-ring $S$ and any $\varphi\in\bInv^1(Q)(S)$, 
\[\varphi(e_S+f_S)=\varphi(1_{B_S})=1_{B_S}=e_S+f_S.\]
Moreover, $\varphi$ commutes with $t_S$, so from $t(1_B,1_B,1_B)=3(e-f)$ we get
\[3\varphi(e_S-f_S)=\varphi(t_S(1_{B_S},1_{B_S},1_{B_S}))=t_S(\varphi(1_{B_S}),\varphi(1_{B_S}),\varphi(1_{B_S}))=3(e_S-f_S),\]
and since $6\in S^*$ this implies that $\varphi(e_S)=e_S$ and $\varphi(f_S)=f_S$. Thus \[e_\Q,f_\Q\in(B\otimes\Q)^{\bInv^1(Q)\times\Q}=B^{\bInv^1(Q)}\otimes\Q,\]
where the equality is due to \cite[Lemma 2]{Ses} as in the previous lemma, and by flatness we conclude that $\bInv^1(Q)$ fixes $e$ and $f$. 
\end{proof}

The next lemma provides the final ingredient into characterizing $\bInv^1(Q)$.

\begin{Lma}\label{L3} Let $S$ be an $R$-ring and assume that $\varphi\in\bInv(Q)(S)$ fixes $e$ and $f$. Then $\varphi$ is of the form $\widehat\rho$ for some $\rho\in\bGL(A)(S)$.
\end{Lma}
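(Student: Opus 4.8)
The plan is to pin down $\varphi$ in three stages: first confine it to the off-diagonal part of $B^A$ using the form $b$, then force it to be block-diagonal using a trilinear form extracted from $\Psi$, and finally identify the two blocks using $b$ once more.

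First I would record how $b$, $e$ and $f$ interact. A direct computation gives $b(e,x)=s$ and $b(f,x)=-r$ for $x=\left(\begin{smallmatrix} r&a\\b&s\end{smallmatrix}\right)$, so the orthogonal complement $W:=\{e,f\}^\perp$ is exactly the submodule $\{r=s=0\}=A^+\oplus A^-$, where $A^+$ (resp.\ $A^-$) denotes the copy of $A$ in the upper-right (resp.\ lower-left) corner. Since $\varphi$ preserves $b$ and fixes $e,f$, for $x\in W$ we have $b(e,\varphi x)=b(\varphi e,\varphi x)=b(e,x)=0$ and likewise $b(f,\varphi x)=0$; hence $\varphi(W)=W$. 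Writing $\varphi|_W=\left(\begin{smallmatrix}\alpha&\beta\\\gamma&\delta\end{smallmatrix}\right)$ in the blocks $A^+\oplus A^-$ (so that $\varphi$ sends the upper-right element $a^+\mapsto(\alpha a)^++(\gamma a)^-$ and the lower-left element $b^-\mapsto(\beta b)^++(\delta b)^-$), the whole of $\varphi$ is determined by $\alpha,\beta,\gamma,\delta$, as $\varphi$ fixes $e,f$ and hence the diagonal $Re\oplus Rf$ pointwise.

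The crucial step is to show $\beta=\gamma=0$. For this I would use the trilinear form $\Phi(x,y,z):=\Psi(e,x,y,z)$, which $\varphi$ preserves because it fixes $e$ and preserves $\Psi$. The key computation is that on $W$ this form is nothing but the polarized Albert norm of the upper-right component: writing $\pi\co W\to A$ for the projection onto $A^+$, one has the integral identity $\Phi(x,y,z)=N(\pi x,\pi y,\pi z)$ for all $x,y,z\in W$. To verify this I would expand $q$ from \eqref{Eq}: a direct calculation gives $\Theta(e,x,x,x)=12N(\pi x)$, whence $\Theta(e,x,y,z)=2N(\pi x,\pi y,\pi z)$ by polarization over the torsion-free ring $\Z$ (for the split algebra), and then over any $R$-ring $S$ and any $A$ by base change and fppf descent. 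Moreover every correction term in $2\Psi(e,x,y,z)=\Theta(e,x,y,z)+\bigl(b(e,x)b(y,z)-b(e,y)b(x,z)+b(e,z)b(x,y)\bigr)$ carries a factor $b(e,\,\cdot\,)$, which vanishes on $W$; thus the spurious factor $2$ is absorbed and $\Phi$ is defined without denominators, so the argument survives even when $2$ and $3$ are not invertible.

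Granting this identity, $A^-$ becomes intrinsic as the radical of $\Phi|_W$: indeed $\Phi(x,y,z)=0$ for all $y,z\in W$ is equivalent to $N(\pi x,\,\cdot\,,\,\cdot\,)=0$ on $A\times A$, which by nondegeneracy of the full linearization of the Albert norm forces $\pi x=0$, i.e.\ $x\in A^-$. Since $\varphi$ preserves both $\Phi$ and $W$, it preserves $A^-$; running the same argument with $f$ in place of $e$ shows it preserves $A^+$. Hence $\beta=\gamma=0$, and as $\varphi|_W$ is invertible so are $\alpha$ and $\delta$. Finally $b(a^+,b^-)=T(a,b)$, so invariance of $b$ yields $T(\alpha a,\delta b)=T(a,b)$ for all $a,b$, i.e.\ $\delta=\alpha^\dagger$. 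Setting $\rho:=\alpha\in\bGL(A)(S)$ we conclude $\varphi=\widehat\rho$. The main obstacle is establishing the integral identity $\Phi|_W=N(\pi\,\cdot\,,\pi\,\cdot\,,\pi\,\cdot\,)$ cleanly, ensuring the passage from $\Theta$ to $\Psi$ cancels the factor; a secondary point needing care is the nondegeneracy of the linearized norm over the arbitrary ring $S$, which I would reduce to the split case by a faithfully flat base change.
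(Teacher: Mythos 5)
Your proposal is correct, and its crux differs genuinely from the paper's. Steps one and three coincide with the paper: both identify the off-diagonal module $W$ as $\{e,f\}^\perp$ for $b$ and both extract $\delta=\alpha^\dagger$ from invariance of $b$ via $b(a^+,b^-)=T(a,b)$. The difference is in how block-diagonality is forced. The paper introduces the $B$-valued trilinear map $\widetilde t$ defined by $\Psi(w,x,y,z)=b(w,\widetilde t(x,y,z))$ (using only nondegeneracy of $b$) and the two explicit identities \eqref{E1}--\eqref{E2}, with $e,f$ inserted in \emph{two} slots; applying $\varphi$ to $\widetilde t(x,f,e)$ for $x$ upper-triangular kills the lower-left component directly. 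You instead insert $e$ (resp.\ $f$) into a \emph{single} slot of the scalar form $\Psi$, prove the integral identity $\Psi(e,x,y,z)=N(\pi x,\pi y,\pi z)$ on $W$ (your computation is right: the coefficient of $t_1t_2t_3t_4$ in $q$ picks up only $4N$-terms from $4rN(a)$, the $b(e,\cdot)$-corrections vanish on $W$, and the factor $2$ cancels over the torsion-free base $\Z$), and then characterize $A^-$ intrinsically as the radical of $\Psi(e,\cdot,\cdot,\cdot)|_W$. This buys a cleaner conceptual statement (the two blocks are the radicals of two cubic forms canonically attached to $e$, $f$ and $\Psi$), at the price of one extra input that you assert rather than prove: nondegeneracy of the full linearization $\Delta(x,y,z)=T(x,y\times z)$ of the Albert norm over an arbitrary ring, including residue characteristics $2$ and $3$. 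This is true, but since $T$ is nondegenerate it amounts to the cross products $y\times z$ spanning $A$, which should be checked integrally on the split algebra; identities with unit coefficients such as $e_j\times e_k=e_i$ and $e_i\times c=-c$ for $c$ in the complementary off-diagonal Peirce component do the job in all characteristics, after which faithfully flat descent handles general $A$ (injectivity of $x\mapsto\Delta(x,\cdot,\cdot)$ descends since $M\to M\otimes S$ is injective for $S$ faithfully flat). Your descent of the identity $\Psi(e,\cdot,\cdot,\cdot)|_W=\Delta\circ\pi^{\otimes 3}$ is likewise only sketched but routine, because the splitting isomorphism $\widehat\rho$ fixes $e$, respects the block decomposition, and intertwines $\pi$ with the norm isometry $\rho$, under which $\Delta$ is invariant. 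With these two points made explicit, your argument is a complete and valid alternative at the same level of generality as the paper's (no invertibility of $2$ or $3$ needed), whereas the paper's route via $\widetilde t$ gets by with nondegeneracy of $b$ alone.
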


The map $\widehat\rho$ was defined in \eqref{Eiota}.

\begin{proof} Since $\varphi$ fixes $Re\oplus Rf$, it maps its orthogonal complement with respect to the bilinear form $b$ to itself. This orthogonal complement consists of the off-diagonal part of $B^A$. Thus for all $a,b\in A$
\begin{equation}\label{E0}\begin{array}{lll}
   \varphi\left(\begin{array}{cc}
0&a\\0&0        \end{array}\right)=\left(\begin{array}{cc}
0&a'\\a''&0        \end{array}\right) &\text{and} &   \varphi\left(\begin{array}{cc}
0&0\\b&0        \end{array}\right)=\left(\begin{array}{cc}
0&b''\\b'&0        \end{array}\right) 
  \end{array}\end{equation}
for some $a',a'',b',b''\in A_S$. We will first show that $a''=b''=0$; the proof is a technical adaptation of the proof of \cite[Lemma 12]{Bro}. We define the trilinear form $\widetilde t$ on $B^A$ defined implicitly by
\[\Psi(w,x,y,z)=b(w,\widetilde t(x,y,z)),\]
so that $\varphi\in\bInv(Q)(S)$ commutes with $\widetilde t$. (This form is used in lieu of the form $t$ from Remark \ref{Rsix}, since we are not assuming 2 or 3 to be invertible.) The key step is to note that
\begin{equation}\label{E1}
\widetilde t\left(\left(\begin{array}{cc}
0&a\\b&0        \end{array}\right),e,f\right)=\left(\begin{array}{cc}
0&a\\0&0        \end{array}\right)
\end{equation}
and
\begin{equation}\label{E2}
\widetilde t\left(\left(\begin{array}{cc}
0&a\\b&0        \end{array}\right),f,e\right)=\left(\begin{array}{cc}
0&0\\-b&0        \end{array}\right),
\end{equation}
which follows from explicit computations using the definitions of $\Psi$ and $b$. Now on the one hand, \eqref{E2} gives
\[\widetilde t\left(\varphi\left(\begin{array}{cc}
0&a\\0&0        \end{array}\right),\varphi(f),\varphi(e)\right)=\varphi \widetilde t\left(\left(\begin{array}{cc}
0&a\\0&0        \end{array}\right),f,e\right)=\varphi(0)=0,
\]
and on the other, by \eqref{E0} and \eqref{E2} and the fact that $\varphi$ fixes $e$ and $f$,
\[\widetilde t\left(\varphi\left(\begin{array}{cc}
0&a\\0&0        \end{array}\right),\varphi(f),\varphi(e)\right)=\widetilde t\left(\left(\begin{array}{cc}
0&a'\\a''&0        \end{array}\right),f,e\right)=\left(\begin{array}{cc}
0&0\\-a''&0        \end{array}\right).
\]
Thus $a''=0$. Arguing similarly with \eqref{E1} instead of \eqref{E2} yields $b''=0$, as desired.

Thus \eqref{E0} gives
\[\begin{array}{lll}
   \varphi\left(\begin{array}{cc}
0&a\\0&0        \end{array}\right)=\left(\begin{array}{cc}
0&\rho(a)\\0&0        \end{array}\right) &\text{and} &   \varphi\left(\begin{array}{cc}
0&0\\b&0        \end{array}\right)=\left(\begin{array}{cc}
0&0\\ \widetilde\rho(b)&0        \end{array}\right) 
  \end{array}\]
for some $\rho,\widetilde\rho\in\bGL(A)$. Since $\varphi$ preserves the bilinear form $b$ it follows that $\widetilde\rho=\rho^\dagger$, and the proof is complete.

\end{proof}

\begin{Thm}\label{TG1} Let $A$ be an Albert algebra over $R$ with cubic norm $N$, and let $Q=Q_A$. Then $\bInv^1(Q)\simeq\bIsom(N)$.
\end{Thm}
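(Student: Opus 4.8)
The plan is to exhibit $\bIsom(N)$ as a closed subgroup of $\bInv^1(Q)$ via $\iota$, and then prove this inclusion is an isomorphism by reducing, through faithfully flat descent, to the split Albert algebra over $\Z$, where Lemmas \ref{L1}, \ref{L2} and \ref{L3} apply. First, $\iota$ does land in $\bInv^1(Q)$: the map $\widehat\rho$ of \eqref{Eiota} alters only the off-diagonal entries of an element of $B^A$, so it fixes every diagonal element, in particular $1_{B^A}=e+f$. Thus $\iota$ factors through $\bInv^1(Q)$, and it is a closed immersion, being an injective homomorphism by Proposition \ref{PGPR}; it remains to show it is surjective onto $\bInv^1(Q)$. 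Since being an isomorphism of affine group schemes is fppf-local on the base, I would pass to a faithfully flat $R$-ring $R'$ with an isomorphism $A_{R'}\simeq A_0\otimes_\Z R'$ of Albert algebras, $A_0$ being the split Albert algebra over $\Z$. Such an isomorphism induces compatible isomorphisms of $N$, $B^A$, $Q$, $\bIsom(N)$ and $\bInv^1(Q)$ over $R'$, intertwining $\iota$ with the base change to $R'$ of the corresponding morphism $\iota_0\co\bIsom(N_0)\to\bInv^1(Q^{A_0})$ over $\Z$. By descent it therefore suffices to prove $\iota_0$ is an isomorphism over $\Z$. The point of reducing a property of the \emph{morphism} $\iota$ is that I need not transport the idempotents $e,f$, which the trivialization of $A$ does not in general respect.

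Over $\Z$ I would show $\iota_0$ is bijective on $S$-points for every $\Z$-ring $S$, which together with injectivity gives the isomorphism of schemes. Fix $\varphi\in\bInv^1(Q^{A_0})(S)$. By Lemma \ref{L2}, $\bInv^1(Q^{A_0})$ fixes $e$ and $f$, so $\varphi$ fixes $e_S$ and $f_S$; by Lemma \ref{L3}, $\varphi=\widehat\rho$ for some $\rho\in\bGL(A_0)(S)$. The remaining task is to see that $\rho$ preserves $N$, and the key is the identity
\[\Psi\left(e,\left(\begin{array}{cc}0&a_2\\0&0\end{array}\right),\left(\begin{array}{cc}0&a_3\\0&0\end{array}\right),\left(\begin{array}{cc}0&a_4\\0&0\end{array}\right)\right)=\Delta(a_2,a_3,a_4),\]
which I would verify over $\Z$ directly from \eqref{Eq} and the definition of $\Psi$: on such arguments every term of $q$ other than $4rN(a)$ vanishes, every $b$-term in the definition of $\Psi$ vanishes because $b$ is alternating and the relevant pairings are zero, and the combinatorial factors cancel to coefficient $1$, with no division by $6$. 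As this is an identity of forms over $\Z$, it persists over $S$. Since $\varphi$ preserves $\Psi$, fixes $e$, and sends the upper off-diagonal element with entry $a$ to the one with entry $\rho(a)$, the identity gives $\Delta(\rho a_2,\rho a_3,\rho a_4)=\Delta(a_2,a_3,a_4)$ for all $a_i\in(A_0)_S$, i.e.\ $\rho\in\bGL(A_0)^\Delta(S)$. By Lemma \ref{L1}, $\bGL(A_0)^\Delta=\bIsom(N_0)$, so $\rho\in\bIsom(N_0)(S)$ and $\varphi=\iota_S(\rho)$, proving surjectivity.

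The main obstacle is precisely what Lemmas \ref{L1} and \ref{L2} are designed to overcome: because $6$ is not invertible over $\Z$, one can neither recover $q$ (hence $N$) from $\Psi$ by dividing $\Theta(x,x,x,x)$ by $12$, nor deduce the fixing of $e$ and $f$ from that of $1_{B^A}$ by dividing $t(1,1,1)=3(e-f)$ by $6$. Both are circumvented by working over $\Q$ and descending to $\Z$ via Seshadri's flatness lemma, and by arranging the norm computation so that the surviving coefficient is exactly $1$, which is what makes the deduction of $\Delta$-invariance work integrally and hence over every ring. Verifying these integral statements carefully, and confirming that the trivialization-and-descent step genuinely reduces the morphism $\iota$ to its split model over $\Z$, are the delicate parts; the rest is the explicit linear algebra of \eqref{Eiota} and \eqref{Eq}.
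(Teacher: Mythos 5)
Your proposal is correct and takes essentially the same route as the paper's proof: reduce by faithfully flat descent to the split Albert algebra over $\Z$, use Lemmas \ref{L2} and \ref{L3} to write any $\varphi\in\bInv^1(Q)(S)$ as $\widehat\rho$, and then deduce $\rho\in\bGL(A_0)^\Delta(S)=\bIsom(N_0)(S)$ from the identity $\Psi(e,\,\cdot\,,\,\cdot\,,\,\cdot\,)=T(a,b\times c)=\Delta(a,b,c)$ together with Lemma \ref{L1} -- exactly the paper's chain of reasoning, with your integral verification of that identity (coefficient $1$, no division by $6$) matching the paper's computation. The only cosmetic differences are the order of the descent step (the paper proves the $\Z$-case first and then descends, while you descend first) and the paper's additional remark that $\rho^\dagger\in\bIsom(N)(S)$, which your argument correctly renders unnecessary since $\varphi=\widehat\rho=\iota_S(\rho)$ once $\rho$ preserves $N$.
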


\begin{proof} First assume that $R=\Z$. The map $\iota$ of Proposition \ref{PGPR} defines a homomorphism of algebraic groups $\bIsom(N)\to\bInv(Q)$, which by construction factors through the inclusion $\bInv^1(Q)\to\bInv(Q)$. Let $S$ be
a ring and $\varphi\in\bInv^1(Q)(S)$. We need to show that $\varphi=\widehat\rho$ for some $\rho\in \bIsom(N)(S)$. By Lemma \ref{L2}, $\varphi$  fixes the diagonal elements of $B^A$. Thus by Lemma \ref{L3},  $\varphi=\widehat\rho$ for some $\rho\in \bGL(A)(S)$. It remains to be shown that $\rho\in\bIsom(N)(S)$.  From the definition of $\Psi$ one computes
\[\Psi\left(\left(\begin{array}{cc}
1&0\\0&0        \end{array}\right),\left(\begin{array}{cc}
0&a\\0&0        \end{array}\right),\left(\begin{array}{cc}
0&b\\0&0        \end{array}\right),\left(\begin{array}{cc}
0&c\\0&0        \end{array}\right)\right)=T(a,b\times c).\]
Applying $\varphi$ and using the fact that $\varphi$ preserves $\Psi$, this equals $T(\rho(a),\rho(b)\times \rho(c))$. Thus by Lemma \ref{L1}, $\rho\in\bIsom(N)(S)$. Similarly one shows that $\rho^\dagger\in\bIsom(N)(S)$, which completes the proof over $\Z$. The statement then holds for the split Albert algebra $A^s$ over an arbitrary ring $R$, since $A^s$ is obtained from the split Albert algebra over $\Z$ by base change. Since any Albert algebra is split by a faithfully flat extension, the claim follows by faithfully flat descent.
\end{proof}

\subsection{The Automorphism Group of a Reduced Brown Algebra}

Fix an Albert algebra $A$ over $R$ and let $B=B^A$ be the associated Brown algebra.

\begin{Rk}\label{Rhat} If $A$ and $A'$ are Albert algebras and $\rho:A\to A'$ a norm isometry, then
\[\widehat\rho:\left(\begin{array}{cc}
r&a\\b&s        \end{array}\right)\mapsto\left(\begin{array}{cc}
r&\rho(a)\\\rho^\dagger(b)&s        \end{array}\right)\]
is a homomorphism of Brown algebras. Indeed, by construction, the involution on $B$ commutes with $\widehat\rho$. Moreover,
\[\widehat\rho(x)\widehat\rho(x')=
\left(\begin{array}{cc}
rr' + T(\rho(a),\rho^\dagger(b'))&r\rho(a')+s'\rho(a)+\rho^\dagger(b)\times \rho^\dagger(b')\\r'\rho^\dagger(b)+s\rho^\dagger(b')+\rho(a)\times \rho(a')& ss'+T(\rho(a'),\rho^\dagger(b))        \end{array}\right)\]
which is equal to 
\[\widehat\rho\left(\begin{array}{cc}
rr' + T(a,b')&ra'+s'a+b\times b'\\r'b+sb'+a\times a'& ss'+T(a',b)        \end{array}\right)=\widehat\rho(xx'):\]
indeed, we have the identities $T(\rho(x),\rho^\dagger(y))=T(x,y)$ and $\rho^\dagger(x\times y)=\rho(x)\times\rho(y)$; the latter follows, by non-degeneracy of $T$, from the invariance of $T(x\times y,z)=N(x,y,z)$ under norm isometries. Thus $\widehat\rho$ is a homomorphism, which is injective (resp.\ surjective) if and only if $\rho$ is.
 \end{Rk}

\begin{Rk}
Note that if 2 is invertible in $R$, and $\rho\in\bAut(B)(S)$ for an $R$-ring $S  $, the element $j=\diag(1,-1)\in B_S$ satisfies $\rho(j)=\pi(\rho)j$ for a unique $\pi(\rho)\in\bmu_2(S)$. It is straightforward to check that this defines a homomorphism of affine group schemes $\pi:\bAut(B)\to\bmu_2$.
\end{Rk}

Next we relate $\bAut(B)$ to the group $\bIsom(N_A)$ via the map $\iota$ of Proposition \ref{PGPR}.

\begin{Prp}\label{Pincl} The map $\iota$
defines a homomorphism of affine group schemes $\bIsom(N_A)\to\bAut(B)$. If $2\in R^*$, then the sequence
\[\xymatrix{\mathbf{1}\ar[r]&\bIsom(N_A)\ar[r]^-{\iota}&\bAut(B)\ar[r]^-{\pi}&\bmu_2\ar[r]&\mathbf{1}}
\]
is split exact.
\end{Prp}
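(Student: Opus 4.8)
The plan is to prove exactness of the sequence term by term, establishing injectivity of $\iota$, exactness at $\bAut(B)$, surjectivity of $\pi$, and the existence of a splitting. First I would verify that $\iota$ indeed lands in $\bAut(B)$: this is precisely the content of Remark \ref{Rhat}, which shows that $\widehat\rho$ is a homomorphism of Brown algebras whenever $\rho$ is a norm isometry, so $\iota_S(\rho)=\widehat\rho\in\bAut(B)(S)$. Injectivity of $\iota$ is also immediate, since $\widehat\rho$ determines $\rho$ by its action on the off-diagonal component $\left(\begin{smallmatrix}0&a\\0&0\end{smallmatrix}\right)\mapsto\left(\begin{smallmatrix}0&\rho(a)\\0&0\end{smallmatrix}\right)$; thus $\widehat\rho=\Id$ forces $\rho=\Id$. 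This gives exactness at $\bIsom(N_A)$.

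Next I would establish exactness at the middle term, i.e.\ that $\Ker(\pi)=\Ima(\iota)$. For the inclusion $\Ima(\iota)\subseteq\Ker(\pi)$, I would compute $\widehat\rho(j)$ directly: since $\rho$ and $\rho^\dagger$ act only on the off-diagonal part and $j=\diag(1,-1)$ is diagonal, we get $\widehat\rho(j)=j$, so $\pi(\widehat\rho)=1$. For the reverse inclusion, suppose $\varphi\in\bAut(B)(S)$ with $\pi(\varphi)=1$, so $\varphi(j)=j$. The key observation is that $\varphi$, being an automorphism of algebras with involution, fixes the identity $1_B=e+f$; combined with $\varphi(j)=\varphi(e-f)=e-f$ and the invertibility of $2$, this yields $\varphi(e)=e$ and $\varphi(f)=f$. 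I would then invoke Lemma \ref{L3}: since every automorphism of $B=B^A$ as an algebra with involution is in particular an automorphism of the associated FTS $Q^A$ (the bilinear form $b$ and the quartic form $\Psi$ being recoverable from the algebra structure when $6\in R^*$), $\varphi$ fixing $e$ and $f$ forces $\varphi=\widehat\rho$ for some $\rho\in\bGL(A)(S)$. That $\rho$ is actually a norm isometry follows as in the proof of Theorem \ref{TG1}, by applying $\varphi$ to the identity $\Psi\bigl(e,\left(\begin{smallmatrix}0&a\\0&0\end{smallmatrix}\right),\left(\begin{smallmatrix}0&b\\0&0\end{smallmatrix}\right),\left(\begin{smallmatrix}0&c\\0&0\end{smallmatrix}\right)\bigr)=T(a,b\times c)$, so $\varphi=\iota_S(\rho)\in\Ima(\iota)$.

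For surjectivity of $\pi$ and the splitting, I would exhibit an explicit automorphism $\sigma$ of $B$ with $\pi(\sigma)=-1$, realizing a section $\bmu_2\to\bAut(B)$. The natural candidate is the swap $\sigma\co\left(\begin{smallmatrix}r&a\\b&s\end{smallmatrix}\right)\mapsto\left(\begin{smallmatrix}s&b\\a&r\end{smallmatrix}\right)$, which exchanges $e\leftrightarrow f$ and sends $j\mapsto -j$, hence $\pi(\sigma)=-1$. I would check that $\sigma$ is an algebra homomorphism commuting with the involution by inspecting the multiplication formula: swapping the two diagonal entries and the two off-diagonal entries is compatible with the symmetric roles that $r,s$ and $a,b$ play in the product (note the symmetry of the cross-product and trace terms). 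Since $\sigma^2=\Id$, the assignment $-1\mapsto\sigma$ extends to a homomorphism $s\co\bmu_2\to\bAut(B)$ with $\pi\circ s=\Id_{\bmu_2}$, giving both surjectivity of $\pi$ and the desired splitting.

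The main obstacle I anticipate is the step identifying $\Ker(\pi)$ with $\Ima(\iota)$, specifically justifying that an automorphism of $B$ as an algebra with involution induces an automorphism of the FTS $Q^A$ so that Lemma \ref{L3} applies. This requires knowing that $b$ and $\Psi$ are intrinsically determined by the Brown algebra structure (together with the choice governing $j$), which is exactly the content of the construction in Section \ref{SBFTS} under the hypothesis $6\in R^*$; here only $2\in R^*$ is assumed in the proposition's split-exactness claim, so I would either need $6\in R^*$ as a running assumption (consistent with the paper's convention after Section 3's opening remarks) or verify directly that $\varphi$ preserves $b$ and $\Psi$. The explicit verifications that $\widehat\rho$ fixes $j$ and that $\sigma$ is an involution-compatible automorphism are routine computations from the multiplication formula and pose no conceptual difficulty.
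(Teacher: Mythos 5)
Your architecture agrees with the paper's on three of the four steps: injectivity of $\iota$, the inclusion $\Ima(\iota)\subseteq\Ker(\pi)$ via $\widehat\rho(j)=j$, and the splitting --- your swap $\sigma$ is exactly the paper's section $\Sigma$ evaluated at $\epsilon=-1$; note, though, that to get a morphism of schemes $\bmu_2\to\bAut(B)$ you need the idempotent formula $\Sigma_S(\epsilon)=\tfrac{1+\epsilon}{2}\Id+\tfrac{1-\epsilon}{2}\sigma$ (the paper's definition), since over a non-connected $S$-point $\epsilon\in\bmu_2(S)$ need not equal $\pm 1$. The genuine gap is where you yourself flagged it, in the step $\Ker(\pi)\subseteq\Ima(\iota)$, and neither of your proposed fixes closes it. The proposition assumes only $2\in R^*$, so residue characteristic $3$ is allowed, and there your route through Lemma \ref{L3} fails at the threshold: Lemma \ref{L3} requires $\varphi\in\bInv(Q^A)(S)$, i.e.\ that $\varphi$ preserve $\Psi$ and $b$. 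Preserving $b$ is fine (skew elements and the algebra structure suffice), but when $3\notin R^*$ the trilinear map $t_z$ of Section \ref{SBFTS} is unavailable and $q$ is not recoverable from the linearization ($\Theta(x,x,x,x)=12\,q(x)$, and $12$ is not invertible), so ``$\Psi$ is intrinsically determined by the algebra structure'' is not established; verifying directly that an involution-algebra automorphism preserves $\Psi$ would itself require a Seshadri-type argument over $\Z$ in the style of Lemmas \ref{L1} and \ref{L2}, not the routine computation your closing paragraph anticipates. There is a second gap in the same regime: even granting $\varphi=\widehat\rho$ and $T(\rho(a),\rho(b)\times\rho(c))=T(a,b\times c)$, i.e.\ $\rho\in\bGL(A)^\Delta(S)$, the passage to $\rho\in\bIsom(N_A)(S)$ uses $\Delta(x,x,x)=6N(x)$ and hence needs $3$ invertible; the paper fills this with Lemma \ref{L1} (proved over $\Z$ via Seshadri), after first reducing --- using smoothness of $\bIsom(N_A)$, \cite[$_4$ 17.9.5]{EGAIV} and descent --- to an algebraically closed field where $A=A_0\otimes_\Z k$, a reduction your proposal never sets up but which your appeal to ``as in the proof of Theorem \ref{TG1}'' silently depends on.

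For contrast, the paper's own proof of the middle inclusion avoids the FTS entirely: after reducing to $k=\overline{k}$, it uses $\varphi(1)=1$, $\varphi(j)=j$ and $2\in k^*$ to fix the diagonal, then purely algebraic identities ($ex=x$ for upper-corner $x$, the product of opposite corner elements landing on the diagonal, and the triple product $(\,{}^0_a{}^0_0\,)$-style identity yielding $\Delta$-invariance) to obtain $\varphi=\widehat\rho$ with $\widetilde\rho=\rho^\dagger$ and $\rho\in\bGL(A)^\Delta$, concluding with Lemma \ref{L1}. The upshot: your argument is correct and in fact somewhat shorter if you add the standing hypothesis $6\in R^*$ (then $\bGL(A)^\Delta=\bIsom(N_A)$ is immediate and Lemma \ref{L3} applies once you note that an involution-algebra automorphism fixing $j$ preserves $b_j$ and $t_j$, hence $\Psi$ by Remark \ref{Rsix}); but as a proof of the proposition at its stated generality the case $3\notin R^*$ is missing, and the paper's direct computation plus Seshadri-over-$\Z$ machinery is precisely what that case requires.
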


\begin{proof} The map $\iota$ is well-defined by Remark \ref{Rhat} with $A'=A$, and defines an injective group homomorphism $\bIsom(N_A)\to\bAut(B)$, which factors through the kernel of $\pi$ by construction. (This also follows from the connectedness of $\bIsom(N_A)$.) 

\medskip

Next we assume that $2\in R^*$ and show that $\iota$ is an isomorphism onto $\ker(\pi)$. Since $\bIsom(N_A)$ is smooth, we may, by \cite[$_4$ 17.9.5]{EGAIV}, assume that $R=k$ is a field, and by descent of isomorphisms, that it is further algebraically closed. In particular, we may assume that $A$ is split, and thus $A=A_0\otimes_\Z k$ for the split Albert algebra $A_0$ over $\Z$. Let $S$ be a $k$-ring\footnote{We will drop the subscript $S$ in the base change to $S$ to simplify notation.} and $\varphi\in\bAut(B)(S)$ with $\pi(\varphi)=1$, so that  $\varphi(j)=j$. Since also $\varphi(1)=1$, and $\{1,j\}$ spans the diagonal elements of $B$, this implies that $\varphi$ fixes any diagonal element of $B$. If
\[x\in\left(\begin{array}{cc}
0&A\\0&0        \end{array}\right),\]
 then  $ex=x$, and since $\varphi(e)=e$ by the above, we conclude that
 \[\varphi(x)\in\left(\begin{array}{cc}
0&A\\0&0        \end{array}\right).\]
Likewise, using $f$, we find that $\varphi$ stabilizes
 \[\left(\begin{array}{cc}
0&0\\A&0        \end{array}\right).\]
Therefore,
 \[\varphi\left(\begin{array}{cc}
r&a\\b&s        \end{array}\right)=\left(\begin{array}{cc}
r&\rho(a)\\\widetilde\rho(b)&s        \end{array}\right),\]
for some linear bijections $\rho,\widetilde\rho:A\to A$. 

Next we show that $\widetilde\rho=\rho^\dagger$. With $a,b\in A$, we compute
\[\varphi\left(\left(\begin{array}{cc}
0&a\\0&0        \end{array}\right)\left(\begin{array}{cc}
0&0\\b&0        \end{array}\right)\right)=\varphi\left(\begin{array}{cc}
T(a,b)&0\\0&0        \end{array}\right)=\left(\begin{array}{cc}
T(a,b)&0\\0&0        \end{array}\right),\]
which is equal to
\[\varphi\left(\begin{array}{cc}
0&a\\0&0        \end{array}\right)\varphi\left(\begin{array}{cc}
0&0\\b&0        \end{array}\right)=\left(\begin{array}{cc}
0&\rho(a)\\0&0        \end{array}\right)\left(\begin{array}{cc}
0&0\\ \widetilde\rho(b)&0        \end{array}\right)=\left(\begin{array}{cc}
T(\rho(a),\widetilde\rho(b))&0\\0&0        \end{array}\right).\]
Since $a$ and $b$ are arbitrary, this implies that $\widetilde\rho=\rho^\dagger$. 

If we next apply $\varphi$ to both sides of the identity
\[\left(\begin{array}{cc}
0&0\\a&0        \end{array}\right)\left(\left(\begin{array}{cc}
0&0\\b&0        \end{array}\right)\left(\begin{array}{cc}
0&0\\c&0        \end{array}\right)\right)=\left(\begin{array}{cc}
0&0\\0&T(a,b\times c)        \end{array}\right),\]
and use the fact that $\varphi$ is a homomorphism, we get
\[T(\rho(a),\rho(b)\times\rho(c))=T(a,b\times c),\]
i.e.\ $\rho\in\bGL(A)^\Delta(S)$, where $\rho\in\bGL(A)^\Delta$ is the $k$-group scheme defined in the previous section. Since $A=A_0\otimes_\Z k$, this is equal to $\bGL(A_0)^\Delta(S)$, where $\bGL(A_0)^\Delta$ is a $\Z$-group scheme. By Lemma \ref{L1}, we have
\[\bGL(A_0)^\Delta(S)=\bIsom(N_{A_0})(S)=\bIsom(N_A)(S)\]
since $A=A_0\otimes_\Z k$ and $S$ is a $k$-ring. Thus $\rho\in \bIsom(N_A)(S)$. This completes the proof of the exactness at $\bAut(B)$.

\medskip

To finally show that the sequence is split exact, we exhibit a splitting $\Sigma:\bmu_2\to \bAut(B)$, defined, for any $R$-ring $S$ and any $\epsilon\in\bmu_2(S)$, by
\[\Sigma_S(\epsilon):\left(\begin{array}{cc}
r&a\\b&s        \end{array}\right)\mapsto \tfrac{1+\epsilon}{2}\left(\begin{array}{cc}
r&a\\b&s        \end{array}\right)+\tfrac{1-\epsilon}{2}\left(\begin{array}{cc}
s&b\\a&r        \end{array}\right).\]
Straightforward calculations show that $\Sigma_S(\epsilon)\in\bAut(B)(S)$ for each $\epsilon\in\bmu_2(S)$, and  that $\Sigma_S$ is a group homomorphism with $\pi_S\Sigma_S=\Id_{\bmu_2(S)}$. Functoriality is clear, and the proof is complete.
\end{proof}

\begin{Cor}\label{Cisom} Assume that $2\in R^*$ and let $A$ be an Albert algebra over $R$. The group $\bAut(B^A)$ is a semidirect product $\bIsom(N_A)\rtimes\bmu_2$. The identity component of $\bAut(B^A)$ is isomorphic to $\bIsom(N_A)$, and thus a simply connected simple group of type $\mathrm{E}_6$. Two reduced Brown algebras $B^A$ and $B^{A'}$ are isomorphic if and only if the norms of $A$ and $A'$ are isometric.
\end{Cor}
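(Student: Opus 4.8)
The plan is to read off the first two assertions formally from Proposition \ref{Pincl} and to concentrate the work on the isomorphism criterion. Proposition \ref{Pincl} provides, when $2\in R^*$, the split exact sequence
\[\mathbf{1}\to\bIsom(N_A)\xrightarrow{\iota}\bAut(B^A)\xrightarrow{\pi}\bmu_2\to\mathbf{1},\]
in which $\bIsom(N_A)=\ker\pi$ is normal and $\Sigma$ is a section; the decomposition $\bAut(B^A)\simeq\bIsom(N_A)\rtimes\bmu_2$ is then immediate, with $\bmu_2$ acting by conjugation through $\Sigma$. For the identity component I would use that $\bmu_2$ is finite étale when $2\in R^*$, so that $\ker\pi=\bIsom(N_A)$ is open and closed in $\bAut(B^A)$; being smooth with connected fibres (a simple simply connected group of type $\E_6$, which I take as known), it is precisely the fibrewise identity component, which settles the statement about $\bAut(B^A)^0$.

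The substance lies in the isomorphism criterion. The direction ``isometric norms give isomorphic algebras'' is immediate from Remark \ref{Rhat}, a norm isometry $\rho\colon A\to A'$ being bijective and hence $\widehat\rho$ an isomorphism of algebras with involution. For the converse, let $\phi\colon B^A\to B^{A'}$ be an isomorphism. Since $\phi$ commutes with the involutions and $j^*=-j$, the element $\phi(j)$ is skew-symmetric; as $2\in R^*$, a direct inspection shows that the skew-symmetric elements of a reduced Brown algebra are exactly the $R$-multiples of $j'$, so $\phi(j)=\lambda j'$, and from $\phi(j)^2=\phi(1)=1$ one gets $\lambda^2=1$. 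Composing with the analogous section $\Sigma'$ of $B^{A'}$, the map $\psi:=\Sigma'(\lambda)\circ\phi$ satisfies $\psi(j)=j'$ and hence $\psi(e)=e'$, $\psi(f)=f'$. The computations in the proof of Proposition \ref{Pincl} now go through over $R$: left and right multiplication by $e'$ confines $\psi$ to off-diagonal parts, so $\psi=\widehat\rho$ for a linear bijection $\rho\colon A\to A'$ whose lower entry is $\rho^\dagger$, and the multiplicative identity producing $T(a,b\times c)$ on the diagonal forces $\rho$ to preserve the full linearization $\Delta$ of the norm.

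The crux, and the step I expect to be the main obstacle, is the final implication: that a $\Delta$-preserving bijection is a norm isometry. This is automatic when $6\in R^*$ but not merely under $2\in R^*$, being exactly the characteristic-$3$ difficulty that Lemma \ref{L1} was designed to handle, so I would argue by faithfully flat descent. Choosing a faithfully flat $R$-ring $S$ over which both $A$ and $A'$ split, and fixing Albert isomorphisms $\alpha\colon A_S\to A^s_S$ and $\alpha'\colon A'_S\to A^s_S$ (which preserve norms, hence $\Delta$), the composite $g:=\alpha'\circ\rho_S\circ\alpha^{-1}$ lies in $\bGL(A^s_S)^\Delta(S)$. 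Since the split Albert algebra is defined over $\Z$, Lemma \ref{L1} base-changes to give $\bGL(A^s_S)^\Delta=\bIsom(N_{A^s_S})$, whence $g$ preserves the norm; transporting back through $\alpha$ and $\alpha'$ shows that $\rho_S$ is a norm isometry $A_S\to A'_S$. Finally, as $N_{A'}\circ\rho=N_A$ is an identity of polynomial laws, it descends from $S$ to $R$, and $\rho$ is the desired norm isometry. Everything outside this descent-to-$\Z$ step is either formal or a direct computation.
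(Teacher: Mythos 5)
Your proposal is correct, but for the isomorphism criterion it takes a genuinely different route from the paper. The paper handles the converse direction cohomologically: an isomorphism $B^{A'}\simeq B^A$ places the class of $N_{A'}$ in the kernel of $\iota^*\colon H_{\mathrm{fppf}}^1(\bIsom(N_A))\to H_{\mathrm{fppf}}^1(\bAut(B^A))$, and this kernel is shown to be trivial via \cite[Proposition 2.4.3]{Gil2}, using that the semidirect product decomposition makes $\bAut(B^A)\to\bAut(B^A)/\bIsom(N_A)$ a torsor with a section, so the quotient's $R$-points form a single orbit. You instead argue constructively: normalize a given isomorphism $\phi$ by composing with $\Sigma'(\lambda)$ so that it sends $j$ to $j'$ (using that skew-symmetric elements are multiples of $j'$ when $2\in R^*$, a fact the paper itself records in Section \ref{SBFTS}), re-run the block computations from the proof of Proposition \ref{Pincl} in the two-algebra setting to exhibit the map as $\widehat\rho$ preserving $\Delta$, and then upgrade ``$\Delta$-preserving'' to ``norm isometry'' by faithfully flat descent to the split case together with Lemma \ref{L1} over $\Z$. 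You correctly identify this last step as the crux: the paper's proof of Proposition \ref{Pincl} performs its computations only after reducing to an algebraically closed field precisely so that Lemma \ref{L1} applies to a split algebra, whereas your descent argument (transport along splitting isomorphisms, apply the $\Z$-group-scheme equality $\bGL(A_0)^\Delta=\bIsom(N_{A_0})$ pointwise, then descend the polynomial-law identity $N_{A'}\circ\rho=N_A$, which is legitimate since equality of polynomial laws is fppf-local) does the same work relatively, and is sound. Trade-offs: your argument is elementary and produces the isometry explicitly from the isomorphism, avoiding non-abelian cohomology entirely; the paper's argument is shorter given the torsor formalism, and the kernel-triviality it establishes is reused verbatim in the final corollary on non-isomorphic isotopes, so it earns its keep later. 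One small bookkeeping point: the diagonal identity computed with lower-left blocks yields that $\rho^\dagger$ (not $\rho$) preserves $\Delta$; either use the analogous identity with upper-right blocks, or note as in the proof of Theorem \ref{TG1} that the two cases are interchangeable --- a single norm isometry $A\to A'$ suffices for the conclusion in any case.
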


\begin{proof} The two first statements are immediate from the previous proposition and \cite[Proposition 2.1]{Als}. For the third, Remark \ref{Rhat} establishes the ``if''-direction. Moreover, the map $\iota$ induces a map
\[\iota^*: H_{\mathrm{fppf}}^1(\bIsom(N_A))\to H_{\mathrm{fppf}}^1(\bAut(B^A))\]
in cohomology. If $B^{A'}$ is isomorphic to $B^A$, then the isometry class of $N_{A'}$ in $H_{\mathrm{fppf}}^1(\bIsom(N_A))$ is in the kernel of $\iota^*$. By \cite[Proposition 2.4.3]{Gil2}, this kernel is in bijection with the $\bAut(B)(R)$-orbits of $\bX(R)$, where  $\bX=\bAut(B)/\bIsom(N_A)$ is the fppf-quotient. Since $\bAut(B)\simeq\bIsom(N_A)\rtimes\bmu_2$ is isomorphic to $\bIsom(N_A)\times\bmu_2$ as an $\bIsom(N_A)$-torsor over $\bX$, it admits a section. Thus $\bX(R)$ consists of a unique $\bAut(B)(R)$-orbit, and the sought kernel is trivial. This completes the proof. \end{proof}

\begin{Rk} The proof of Proposition \ref{Pincl} shows over any ring $R$ (not assuming $2\in R^*)$, $\iota$ defines an isomorphism between $\bIsom(N_A)$ and the group of all automorphisms of $B^A$ fixing $e$ and $f$. Denoting this group by $\bAut(B)^{(e,f)}$ we get, together with Theorem \ref{TG1}, the isomorphisms
\[\bInv^1(Q^A)\simeq\bAut(B^A)^{(e,f)}\simeq\bIsom(N_A).\]
\end{Rk}

\section{Isotopes}

\begin{Def} An \emph{isotopy} between two Brown algebras $B$ and $B'$, with triple products denoted by $\{,,\}$ and $\{,,\}'$, respectively, is a pair of invertible $R$-linear maps $\varphi,\psi: B\to B'$ such that
\begin{equation}\label{Eisot}
\varphi(\{x,y,z\})=\{\varphi(x),\psi(y),\varphi(z)\}'. 
\end{equation}
\end{Def}

\begin{Def} An element $u\in B$ is \emph{conjugate invertible} if there exists $v\in B$ such that $V_{u,v}=\Id$. We then say that $v$ is a \emph{conjugate inverse} of $u$.
\end{Def}

\emph{Henceforth we assume $6\in R^*$.}

\medskip

\begin{Rk} If $V_{u,v}=\Id$ in a Brown algebra $B$, then $V_{v,u}=\Id$. To see this, assume first that $B=B^A$ is reduced. If $V_{u,v}=\Id$, then for any $x\in B$,
\begin{equation}\label{EV}
(uv^*)x+(xv^*)u-(xu^*)v=x. 
\end{equation}
Applying this to $x=1$ yields
\[uv^*=1+u^*v-v^*u\]
Now, $u^*v-v^*u$ is skew-symmetric and thus equal to $\lambda j$ for some $\lambda\in R$. Thus $uv^*=1+\lambda j$, and applying the involution to both sides we get $vu^*=1-\lambda j$. We want to show that $V_{v,u}x=x$, which is equivalent to
\[(vu^*)x-x=(xv^*)u-(xu^*)v.\] 
\end{Rk}
The left hand side is equal to $-\lambda j$ since $vu^*=1-\lambda j$, while the right hand side is equal to $x-(uv^*)x$ by \eqref{EV}. But this is also equal to $-\lambda j$. This proves the statement in the reduced case, and the general case follows by descent, in view of the identity $V_{x,y}\otimes\Id_S=V_{x\otimes 1_S,y\otimes 1_S}$ for any $R$-ring $S$.

The meaning of this remark is that if $u$ is conjugate invertible with conjugate inverse $v$, then $v$ is conjugate invertible with conjugate inverse $u$.

\begin{Rk}\label{Riso} Following \cite{AH}, we observe that, even in the ring setting, the involution of a Brown algebra can be expressed via the triple product as
\[x^*=2x-\{x,1,1\},\]
and the multiplication as
\[xy=\left\{\begin{array}{ll}
           \{x,1,y\} & \text{\ if\ } x^*=x\\ 
           \{x,1,\tfrac13(y+2y^*)\}-2\{\tfrac13(y+2y^*),1,x\} & \text{\ if\ } x^*=-x.
           \end{array}\right.
\]
Thus to check that a linear map $\rho:B\to B'$ between Brown algebras is a homomorphism, it suffices to check that $\rho(1_B)=1_{B'}$ and
\[\rho U_{x,y}(1)=U'_{\rho(x),\rho(y)}\rho(1),\] since then by the above it is compatible with the involution and the multiplication. (Here $U'$ is the $U$-operator of $B'$.) 
\end{Rk}

\begin{Def}
Let $B$ be a Brown algebra over $R$ and $u\in B$ a conjugate invertible element. The \emph{(principal) isotope of $B$ defined by $u$} is the algebra  $B^{(u)}$ with underlying module $B$, and with
circle product
\[x\circ_uy=\{x,u,y\}.\]
\end{Def}

This is a unital algebra with involution and multiplication obtained from the circle product by Remark \ref{Riso}.

\begin{Rk} The link between isotopy and principal isotopes over fields of characteristic not 2 or 3 is discussed in \cite{AH}. We content ourselves with one motivational remark that easily generalizes to the ring setting as follows.

With $x=y=1_B$, equation \eqref{Eisot} gives $\varphi V_{1_B,1_B}=V_{\varphi(1_B),\psi(1_B)}\varphi$. Since $V_{1_B,1_B}=\Id_B$ and $\varphi$ is invertible, this implies that $\psi(1_B)$ is conjugate invertible with conjugate inverse $\varphi(1_B)$. Thus if $(\varphi,\psi)$ is an isotopy from $B$ to $B'$, then
\[\varphi(x\circ y)=\varphi\{x,1_B,y\}=\{\varphi(x),\psi(1_B),\varphi(y)\}'=\varphi(x)\circ_u' \varphi(y),\]
where $u=\psi(1_B)$, and $'$ indicates working in $B'$. Thus $\varphi$ is an isomorphism from $B$ to the principal isotope of $B'$ defined by $u$. As a consequence, all Brown algebras isotopic to a given Brown algebra are isomorphic to principal isotopes of it. 
\end{Rk}

\subsection{Isotopy and Invariance}
In this section we continue to assume that $6\in R^*$. Our goal is to study isotopes of Brown algebras using the invariance group of Freudenthal triple systems. Recall that to a Brown algebra $B$ over $R$ we have associated one FTS for each skew-symmetric $z\in B$ with invertible square. If $B$ is reduced, the existence of such elements is guaranteed. In the reduced case, an obvious choice is the element $j=\diag(1,-1)$. We start by extending Theorem \ref{TG1} to not necessarily reduced algebras.

\begin{Prp}\label{Pstrong} Let $B$ be a Brown algebra over $R$ containing a skew-symmetric element $z$ with invertible square, and let $Q=Q(B,z)$. Then $\bInv^1(Q)$ is a simply connected simple group scheme of type $\E_6$.
\end{Prp}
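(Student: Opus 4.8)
The plan is to realize $\bInv^1(Q)$ as the stabilizer of $1_B$ inside $\bInv(Q)$, and to transport Theorem~\ref{TG1} to the non-reduced setting by faithfully flat descent. By construction $Q=Q(B,z)$ is an FTS, so $\bInv(Q)$ is simply connected simple of type $\E_7$ by Proposition~\ref{PGPR}, and $\bInv^1(Q)$ is the closed subgroup scheme fixing the distinguished element $1_B$, in the sense of Definition~\ref{DG}. Since being a simply connected simple group scheme of type $\E_6$ may be checked after a faithfully flat base change (together with smoothness and affineness, on geometric fibers), I would pass to a faithfully flat $R$-ring $S$ over which $B_S$ becomes reduced and argue there.

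Over such an $S$, Subsection~\ref{SBFTS} furnishes an isomorphism of FTS $\Phi\colon Q_S\to Q^A$ for an Albert algebra $A$ over $S$, of the form $\Phi=g_\lambda\circ\varphi$, where $\varphi\colon B_S\to B^A$ is an isomorphism of algebras with involution (so $\varphi(1_B)=\diag(1,1)$), where $\lambda\in S^*$ is determined by $\varphi(z)=\lambda j$, and where $g_\lambda$ is the scaling map of \cite[Example 16.9]{GPR}. The key point, and the main obstacle compared with the reduced case, is that $\Phi$ does \emph{not} preserve the identity: one computes $\Phi(1_B)=g_\lambda(\diag(1,1))=\diag(\lambda^{-1},\lambda^2)$. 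This element is in general not in the $\bInv(Q^A)$-orbit of $\diag(1,1)$, since the quartic $q$ recovered from the FTS (recall $6\in R^*$) takes the value $(\lambda^{-1}\cdot\lambda^2)^2=\lambda^2$ on it versus $1$ on $\diag(1,1)$, and $\bInv(Q^A)$ preserves $q$. Hence one cannot simply invoke Theorem~\ref{TG1} after moving $1_B$ to $\diag(1,1)$ by an automorphism.

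The resolution is to exploit $g_\lambda$ as a similitude. Conjugation $g\mapsto\Phi g\Phi^{-1}$ identifies $\bInv^1(Q)_S=\Stab_{\bInv(Q_S)}(1_B)$ with $\Stab_{\bInv(Q^A)}(\diag(\lambda^{-1},\lambda^2))$. Now $g_\lambda$ preserves $t_j$ and $b_j$ up to the common scalar $\lambda$, so it is a similitude of $Q^A$ and therefore normalizes the group of strict automorphisms $\bInv(Q^A)$; conjugation by $g_\lambda$ is thus an automorphism of the $S$-group scheme $\bInv(Q^A)$ carrying $\Stab(\diag(1,1))$ onto $\Stab(g_\lambda\cdot\diag(1,1))=\Stab(\diag(\lambda^{-1},\lambda^2))$. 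Combining these identifications yields
\[
\bInv^1(Q)_S \;\simeq\; g_\lambda\,\bInv^1(Q^A)\,g_\lambda^{-1} \;\simeq\; \bInv^1(Q^A) \;\simeq\; \bIsom(N_A),
\]
where the last isomorphism is Theorem~\ref{TG1}, and $\bIsom(N_A)$ is a simply connected simple group scheme of type $\E_6$ by Corollary~\ref{Cisom}.

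Finally I would descend. Smoothness, flatness and affineness of $\bInv^1(Q)$ all descend along the faithfully flat morphism $\Spec S\to\Spec R$, and since this morphism is surjective every geometric fiber of $\bInv^1(Q)$ occurs as a geometric fiber of $\bInv^1(Q)_S\simeq\bIsom(N_A)$, hence is simply connected simple of type $\E_6$. Therefore $\bInv^1(Q)$ is a simply connected simple $R$-group scheme of type $\E_6$, as claimed. The only delicate point is the scalar bookkeeping of the previous paragraph---verifying that $g_\lambda$ is a genuine similitude and that conjugation by it implements the required change of base point---the remaining descent steps being standard.
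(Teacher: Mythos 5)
Your proposal is correct and takes essentially the same route as the paper: base change along a faithfully flat $S$ splitting $B$, transport of $\bInv^1$ by conjugation, and an appeal to Theorem~\ref{TG1} (with the standard descent of ``simply connected simple of type $\E_6$''). Your only deviation is the detour through $g_\lambda$: the paper conjugates directly by the unital algebra isomorphism $\varphi$, noting that $\bInv\bigl((B^s,\lambda t_j,\lambda b_j)\bigr)=\bInv(Q^s)$ because rescaling both forms by a common unit leaves the invariance group unchanged; since your composite $g_\lambda^{-1}\Phi$ equals this $\varphi$, your similitude bookkeeping is valid but can be short-circuited.
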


The group scheme $\bInv^1(Q)$ is defined in analogy with Definition \ref{DG}; namely

\[\bInv^1(Q)(S)=\{\varphi\in\bInv(Q)(S)\mid \varphi(1_{B_S})=1_{B_S}\}\]
for each $R$-ring $S$. 

\begin{proof} Let $A^s$ be the split Albert algebra over $R$. The FTS associated to $A^s$ is $Q^s=(B^s,t_j,b_j)$. We will prove the claim by showing that $\bInv^1(Q)$ is fppf-locally isomorphic to $\bIsom(N^{A^s})$. There is a a faithfully flat $R$-ring $S$ and an isomorphism
\[\varphi: B\otimes S\to B^s\otimes S\]
of Brown algebras over $S$. Then $\varphi$ defines an isomorphism from $Q_S=(B,t_z,b_z)\otimes S$ to $(B^s,\lambda t_j,\lambda b_j)\otimes S$, with $\lambda\in S^*$, as in Section \ref{SBFTS}. It is then straightforward to check that conjugation by $\varphi$ defines an isomorphism $\bInv^1(Q\otimes S)\to \bInv^1(Q^s\otimes S)$. Since $\bInv^1(Q\otimes S)$ is obtained from $\bInv^1(Q)$ by base change, this shows that $\bInv^1(Q)$ is fppf-locally isomorphic to $\bInv^1(Q^s)$. We conclude with Theorem \ref{TG1}.
\end{proof}

Let $B$ be a Brown algebra over $R$ containing a skew-symmetric element $z$ with invertible square, and let $Q=Q(B,z)$.  We set, for any $R$-ring $S$,
\[\bS_Q(S)=\{x\in B\otimes S\mid q_S(x)=1\},\]
where $q$ is the quartic form $q:x\mapsto \tfrac1{12}b_z(x,t_z(x,x,x))$. This defines an $R$-scheme $\bS_Q$, on which $\bInv(Q)$ acts naturally.

\begin{Prp}\label{Pquot} The fppf-quotient $\bInv(Q)/\bInv^1(Q)$ is representable by a smooth scheme. The map $\Pi:\bInv(Q)\to\bS_Q$ defined by $\rho\mapsto\rho(1_B)$ for any $R$-ring $S$ and $\rho\in\bInv(Q)(S)$, induces an isomorphism $\bInv(Q)/\bInv^1(Q)\to\bS_Q$.
\end{Prp}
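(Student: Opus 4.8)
The plan is to exhibit $\Pi$ as an fppf torsor under $\bInv^1(Q)$; this simultaneously yields representability of the quotient and the asserted isomorphism. First I record the geometry. In the reduced model $B^A$ with $z=j$ one has $1_B=e+f$, and direct substitution in \eqref{Eq} gives $q(1_B)=1$ (whence $q(1_B)=1$ in general by descent), so $\Pi$ indeed takes values in $\bS_Q$. The morphism $\Pi$ is equivariant for left translation of $\bInv(Q)$ on itself and the natural action on $\bS_Q$, and, by the very definition of $\bInv^1(Q)$ as the stabilizer of $1_B$, the fibre product $\bInv(Q)\times_{\bS_Q}\bInv(Q)$ consists of the pairs $(\rho,\rho')$ with $\rho^{-1}\rho'\in\bInv^1(Q)$. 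Hence $(\rho,\sigma)\mapsto(\rho,\rho\sigma)$ identifies $\bInv(Q)\times\bInv^1(Q)$ with $\bInv(Q)\times_{\bS_Q}\bInv(Q)$. Consequently, as soon as $\Pi$ is shown to be faithfully flat and locally of finite presentation, it is an $\bInv^1(Q)$-torsor; then $\bS_Q$ represents the fppf-quotient $\bInv(Q)/\bInv^1(Q)$, and the map it induces is the claimed isomorphism. The whole statement therefore reduces to proving that $\Pi$ is an fppf-cover.

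To analyse $\Pi$ I would work fppf-locally and pass to the split FTS $Q^s$, which is legitimate since $\bInv(Q)$, $\bInv^1(Q)$ and $\bS_Q$ all commute with base change and since faithful flatness and surjectivity may be tested after a faithfully flat extension and then on fibres. The crucial input is transitivity over a field: over an algebraically closed $R$-field $k$ (where $6\in k^*$ grants access to the classical theory), any $x$ with $q(x)=1\neq 0$ is of maximal rank, and in the minuscule $56$-dimensional representation of the split group of type $\E_7$ such elements form a single orbit whose stabilizer is of type $\E_6$. In particular every geometric fibre of $\bS_Q$ is a single orbit of a smooth group, hence smooth; since $\{q=1\}$ is a flat hypersurface in the rank-$56$ module $B$, this shows that $\bS_Q$ is smooth over $R$ of relative dimension $55$, and that $\Pi$ is surjective on geometric fibres.

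It remains to see that $\Pi$ is smooth. I would compute the differential of $\rho\mapsto\rho(1_B)$ at the identity, namely the infinitesimal action $\Lie(\bInv(Q))\to B$, $\xi\mapsto\xi\cdot 1_B$, whose kernel is the Lie algebra of the stabilizer $\bInv^1(Q)$. As $\bInv^1(Q)$ is smooth of dimension $78$ by Proposition \ref{Pstrong} and $\bInv(Q)$ has dimension $133$ by Proposition \ref{PGPR}, the image has dimension $55$ and therefore fills the tangent space $T_{1_B}\bS_Q$; by equivariance the differential is surjective at every point, so the morphism $\Pi$ between the smooth $R$-schemes $\bInv(Q)$ and $\bS_Q$ is smooth. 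Being smooth and fibrewise surjective, $\Pi$ is faithfully flat and locally of finite presentation, hence an fppf-cover, and the reduction of the first paragraph concludes the proof.

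The main obstacle is the transitivity of the second paragraph, together with the identification of the generic stabilizer as a group of type $\E_6$: both are imported from the theory over fields, which is precisely where the standing hypothesis $6\in R^*$ becomes indispensable. By contrast, the torsor formalism of the first paragraph and the tangent-space dimension count of the third are formal once transitivity is available.
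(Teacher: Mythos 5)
Your proof is correct, and it rests on the same two pillars as the paper's own argument: transitivity of $\bInv(Q)$ on geometric fibres of $\bS_Q$, imported from the theory over algebraically closed fields (the paper cites \cite[Theorem 3]{Bro}, noting that over such a field every Albert algebra is split, hence reduced), and the identification of the scheme-theoretic stabilizer of $1_B$ with the smooth group $\bInv^1(Q)$ of type $\E_6$ via Proposition \ref{Pstrong}. Where you differ is in how these are assembled. The paper works in the opposite order: it first forms the abstract fppf-quotient, obtaining its representability and smoothness from the smoothness of $\bInv(Q)$ and $\bInv^1(Q)$ by \cite[XVI.2.2 and VIB.9.2]{SGA3}, and then concludes that the induced map to $\bS_Q$ is an isomorphism by the homogeneous-space criterion \cite[III.3.2.1]{DG}. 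You instead prove directly that $\bS_Q$ is smooth over $R$ and that $\Pi$ is a smooth surjection, hence an fppf cover, so that the formal torsor argument of your first paragraph exhibits $\bS_Q$ itself as the quotient. In effect, your third paragraph reproves the Demazure--Gabriel criterion in this special case via the Lie-algebra dimension count $133-78=55$ (both dimensions being available from Propositions \ref{PGPR} and \ref{Pstrong}); this makes the proof longer but self-contained, whereas the paper's version is shorter because it outsources exactly these verifications to SGA3 and \cite{DG}.

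One step should be tightened. The assertion that each geometric fibre of $\bS_Q$ is ``a single orbit of a smooth group, hence smooth'' only shows that the \emph{reduced} subscheme underlying the fibre is smooth; a priori the scheme-theoretic hypersurface $\{q=1\}$ over a field could be non-reduced, and it is this scheme whose tangent space enters your dimension count. The repair is one line: since $q$ is homogeneous of degree $4$ and $6\in R^*$, the Euler identity gives $(\mathrm{d}q)_x(x)=4q(x)=4\neq 0$ at every point of the fibre, so $q-1$ has nowhere-vanishing differential along $\{q=1\}$ and the fibre is smooth of dimension $55$ as a scheme. Likewise, the flatness of $\{q=1\}$ over $R$, which you assert in passing, follows from the slicing criterion for flatness applied to the finitely presented hypersurface, since $q-1$ is a nonzero polynomial, hence a nonzerodivisor, in each fibre. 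With these two standard glosses filled in, your argument is complete and yields both assertions of the proposition.
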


We will prove this by reducing to the setup of \cite{Bro}. Note that the form $q$ in \cite{Bro} is a factor $-2$ times the form which we call $q$.

\begin{proof} The action of $\bInv(Q)$ on $\bS_Q$ is transitive on geometric fibres due to \cite[Theorem 3]{Bro}, since indeed, over an algebraically closed field, any Albert algebra is split, hence a fortiori reduced. The stabilizer of $1_B$ is $\bInv^1(Q)$. By Proposition \ref{Pstrong}, the group scheme $\bInv^1(Q)$ is simply connected simple of type $\E_6$. Since thus $\bInv(Q)$ and $\bInv^1(Q)$ are both smooth, so is their quotient by \cite[XVI.2.2 and VIB.9.2]{SGA3}. The induced map is then an isomorphism by \cite[III.3.2.1]{DG}.
\end{proof}

\begin{Rk}\label{Rsurj} As a consequence, the action of $\bInv(Q)$ on $\bS_Q$ is fppf-locally transitive: if $u\in \bS_Q(R)$, then for some $R$-ring $S$, the element $u\otimes 1\in\bS_Q(S)$ equals $\rho(1_{B_S})$ for some $\rho\in\bInv(Q)(S)$.
 
\end{Rk}

 \begin{Lma}\label{Lconj} If $B=B^A$ and $Q=Q^A$ for an Albert algebra $A$, and $u\in \bS_Q(R)$, satisfies $u=\varphi(1)$ for some $\varphi\in\bInv(Q)(R)$, then $U_u$ is invertible and $u$ is conjugate invertible.
 \end{Lma}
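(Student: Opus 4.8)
The plan is to reduce both assertions to the corresponding (easy) statements about the unit element $1=1_B$ and then transport them along $\varphi$. The subtlety is that $\varphi$ is only an automorphism of the FTS $Q=Q^A=(B^A,t_j,b_j)$, not of the Brown algebra $B^A$; nevertheless I claim it is automatically an \emph{autotopy} of the triple product, and this is what makes the transport possible.

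First I would establish this autotopy relation. Recall from Section \ref{SBFTS} that, with $z=j$,
\[t_j(x,y,w)=2\{x,jy,w\}-b_j(y,w)x-b_j(y,x)w-b_j(x,w)y.\]
Applying $\varphi$ to this identity and comparing with the same identity evaluated at $(\varphi x,\varphi y,\varphi w)$, the terms involving $b_j$ agree on both sides (the scalars are untouched by $\varphi$, and $\varphi$ preserves $b_j$) and cancel, while the $t_j$-values agree because $\varphi$ preserves $t_j$; what survives is $\varphi\{x,jy,w\}=\{\varphi x,j\varphi y,\varphi w\}$. Writing $\ell_j$ for left multiplication by $j$ — an explicit linear involution of $B^A$, so that $\ell_j^{-1}=\ell_j$ — and substituting $y\mapsto\ell_j y$, this becomes
\[\varphi\{x,y,w\}=\{\varphi x,\psi y,\varphi w\},\qquad \psi:=\ell_j\,\varphi\,\ell_j,\]
so that $(\varphi,\psi)$ is an autotopy of $B^A$ in the sense of \eqref{Eisot}.

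With this in hand, the two claims follow by specialization. Rewriting the autotopy relation as operator identities gives $V_{\varphi x,\psi y}=\varphi V_{x,y}\varphi^{-1}$ and $U_{\varphi x}=\varphi\,U_x\,\psi^{-1}$. For conjugate invertibility, a direct computation using $1^*=1$ shows $V_{1,1}=\Id$; hence
\[V_{u,\psi(1)}=V_{\varphi(1),\psi(1)}=\varphi V_{1,1}\varphi^{-1}=\Id,\]
so $u$ is conjugate invertible with conjugate inverse $\psi(1)=j\,\varphi(j)$. For invertibility of $U_u$, one checks that $U_1$ acts as the identity on the symmetric part of $B^A$ and as $-3$ on the skew-symmetric part (since $U_1 y=2y^*-y$), hence is invertible because $3\in R^*$; then $U_u=U_{\varphi(1)}=\varphi\,U_1\,\psi^{-1}$ is a composite of invertible maps and is therefore invertible.

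The main obstacle is the first step: recognizing that an FTS automorphism, which a priori only preserves $t_j$ and $b_j$, is forced to be an autotopy of the Brown-algebra triple product, and pinning down the middle-slot twist $\psi=\ell_j\varphi\ell_j$. Once the autotopy relation is secured, everything else is formal; the only auxiliary facts needed are that $\ell_j$ is invertible (immediate from its explicit diagonal form) and the trivial invertibility of $V_{1,1}$ and $U_1$. Note, incidentally, that the hypothesis $u\in\bS_Q(R)$ is automatic here, since $\varphi$ preserves $q$ and $q(1)=1$.
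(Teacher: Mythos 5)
Your proposal is correct and takes essentially the same route as the paper: the paper likewise extracts the twisted autotopy $\varphi\{x,y,w\}=\{\varphi x,\psi y,\varphi w\}$ with $\psi\colon w\mapsto j\varphi(jw)$ from the expression of $t_j$ in terms of $\{\,,\,,\,\}$ and $b_j$ (quoting it from Garibaldi), then concludes $U_u=\varphi U_1\psi^{-1}$, invertible because $U_1$ acts as $1$ on symmetric and $-3$ on skew-symmetric elements, and $V_{u,\psi(1)}=\Id$. The only difference is cosmetic: you package conjugate invertibility as $V_{\varphi(1),\psi(1)}=\varphi V_{1,1}\varphi^{-1}$, while the paper evaluates $V_{u,\psi(1)}(x)$ directly via $U_{1,\varphi^{-1}(x)}(1)=\varphi^{-1}(x)$ --- both resting on the same fact $\{1,1,z\}=z$.
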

 
 \begin{proof} Following \cite{Gar}, we have
 \[2U_{x,z}(jy)=\{x,jy,z\}=t(x,y,z)+b(y,z)x+b(y,x)z+b(x,z)y.\]
 Thus if $\varphi\in\bInv(Q)(R)$, then
 \[\varphi(U_{x,z}(jy))=U_{\varphi(x),\varphi(z)}(j\varphi(y)).\]
 Using $j(jy)=y$ and setting $y'=jy$ we get
 \[\varphi(U_{x,z}(y'))=U_{\varphi(x),\varphi(z)}(\psi(y')),\]
 where $\psi:w\mapsto j\varphi(jw)$. Thus $\varphi U_{x,z}=U_{\varphi(x),\varphi(z)}\psi$, and in particular
 \[U_u=U_{\varphi(1),\varphi(1)}=\varphi U_1\psi^{-1}.\]
 Noting that $U_1(x)=x$ if $x$ is symmetric with respect to the involution, and $U_1(x)=-3x$ if $x$ is skew-symmetric, this shows that $U_u$ is invertible. We further claim that $u$ is conjugate invertible. Indeed,
 \[V_{u,\psi(1)}(x)=U_{\varphi(1),\varphi(\varphi^{-1}(x)}(\psi(1))=\varphi(U_{1,\varphi^{-1}(x)}(1))=\varphi(\varphi^{-1}(x))=x,\]
 whence $V_{u,v}=\Id$ when $v=\psi(1)$. This completes the proof.
 \end{proof}

 Next we use descent to prove the general case.
 
 \begin{Lma}\label{Lconjinv} Let $B$ be a Brown algebra with a skew-symmetric element $z$ with invertible square, and set $Q=Q(B,z)$. Then any $u\in\bS_Q(R)$ is conjugate invertible.
 \end{Lma}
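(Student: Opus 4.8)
The plan is to reduce the general case to the reduced case established in Lemma~\ref{Lconj}, using faithfully flat descent together with the fppf-local transitivity recorded in Remark~\ref{Rsurj}. The obstacle to applying Lemma~\ref{Lconj} directly is twofold: first, a general Brown algebra $B$ need not be reduced, and second, even for $u\in\bS_Q(R)$ we are not guaranteed that $u=\varphi(1)$ for some $\varphi\in\bInv(Q)(R)$ — we only have such a $\varphi$ after passing to a faithfully flat extension. The key point is that conjugate invertibility of $u$ is equivalent to invertibility of the operator $V_{u,-}\in\End(B)$, and invertibility of a linear operator descends along faithfully flat extensions.

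First I would make precise the descent criterion. By definition, $u\in\bS_Q(R)$ is conjugate invertible if and only if the $R$-linear operator $V_{u,-}\co B\to B$, $v\mapsto V_{u,v}(1_B)$, is surjective (it suffices to hit $1_B$, but surjectivity of this operator is the clean statement to descend); more robustly, I would identify conjugate invertibility with invertibility of a suitable endomorphism built from the triple product, so that the condition becomes the nonvanishing of a determinant, which is visibly local for the faithfully flat topology. Concretely, the identity $V_{x,y}\ot\Id_S=V_{x\ot 1_S,y\ot 1_S}$ (used already in Remark~\ref{Riso}'s surrounding discussion) shows that the relevant operator base-changes correctly, so that $u$ is conjugate invertible over $R$ if and only if $u\ot 1_S$ is conjugate invertible over $S$ for any faithfully flat $R$-ring $S$.

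Next I would produce a suitable $S$. By Remark~\ref{Rsurj}, since $u\in\bS_Q(R)$, there is a faithfully flat $R$-ring $S$ and $\rho\in\bInv(Q)(S)$ with $u\ot 1_S=\rho(1_{B_S})$. After a further faithfully flat extension (which we may absorb into $S$), we may assume in addition that $B_S$ is reduced, say $B_S\simeq B^{A^s}$ for the split Albert algebra, so that $Q_S$ is isomorphic to the FTS $Q^{A^s}$ of a reduced Brown algebra. The point is that \emph{both} conditions — that $u$ lies in the image of the orbit map and that the algebra be reduced — can be arranged simultaneously over a single faithfully flat $S$.

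Finally, over $S$ we are exactly in the situation of Lemma~\ref{Lconj}: the Brown algebra $B_S$ is reduced, $Q_S=Q^{A^s}$, and $u\ot 1_S=\rho(1_{B_S})$ with $\rho\in\bInv(Q_S)(S)$. That lemma then gives that $u\ot 1_S$ is conjugate invertible in $B_S$. By the descent criterion of the first step, $u$ is conjugate invertible in $B$, as required. The main obstacle is the bookkeeping of the first step — ensuring that conjugate invertibility is genuinely a faithfully flat local property, i.e.\ expressing it as invertibility of a single $R$-linear operator so that one may invoke faithfully flat descent of isomorphisms of modules; once that is in place, the geometric content is supplied entirely by Remark~\ref{Rsurj} and Lemma~\ref{Lconj}.
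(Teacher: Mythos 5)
Your overall strategy --- reduce to Lemma~\ref{Lconj} over a faithfully flat $S$ supplied by Remark~\ref{Rsurj}, then descend --- is exactly the paper's, but your pivotal first step has a genuine gap. Conjugate invertibility is an \emph{existential} statement ($\exists\, v$ with $V_{u,v}=\Id$), and existential statements do not descend along faithfully flat extensions: a conjugate inverse of $u\otimes 1_S$ produced over $S$ need not lie in the image of $B\to B_S$. To descend, you must convert the statement into a property of a single operator defined over $R$, and you never correctly do so. The operator you name, $v\mapsto V_{u,v}(1_B)=U_{u,1_B}(v)$, fails on both counts: (i) Lemma~\ref{Lconj} gives bijectivity of $U_{u\otimes 1}=U_{u\otimes 1,\,u\otimes 1}$ locally, not of $U_{u\otimes 1,\,1}$, so you have no local (hence no descended) surjectivity or injectivity for your operator; and (ii) even granted some $v\in B$ with $V_{u,v}(1_B)=1_B$, this single identity is not known to imply $V_{u,v}=\Id$ --- nothing in the paper or in your argument establishes that characterization, so your parenthetical ``it suffices to hit $1_B$'' is unjustified. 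Your hedge (``a suitable endomorphism built from the triple product'') gestures at the right fix but does not supply it.

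The paper's proof shows what is actually needed: the operator is $U_u$. By Remark~\ref{Rsurj} one chooses $S$ faithfully flat with $B_S$ split and $u\otimes 1_S=\varphi(1_{B_S})$ for $\varphi\in\bInv(Q)(S)$; Lemma~\ref{Lconj} then gives that $U_u\otimes\Id_S=U_{u\otimes 1_S}$ is bijective, and bijectivity of an $R$-linear map \emph{does} descend, so $U_u$ is bijective over $R$. This yields a canonical global candidate $v=U_u^{-1}(u)\in B$. One then verifies $V_{u,v}=\Id$ again by descent: over $S$ the conjugate inverse satisfies $U_{u\otimes 1_S}(\widehat{u\otimes 1_S})=V_{u\otimes 1_S,\widehat{u\otimes 1_S}}(u\otimes 1_S)=u\otimes 1_S$, so $\widehat{u\otimes 1_S}=U_{u\otimes 1_S}^{-1}(u\otimes 1_S)=v\otimes 1_S$, whence $V_{u,v}(x)-x$ vanishes after the faithfully flat base change and therefore vanishes over $R$. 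The essential content your write-up is missing is precisely this construction of the conjugate inverse \emph{over $R$}, together with the identification of its base change with the (unique) local conjugate inverse; without it, the final sentence of your proof does not go through.
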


 \begin{proof} First we show that the linear map $U_u:B\to B$ is bijective. This is equivalent to $U_u\otimes \Id_S: B_S\to B_S$ being bijective for some faithfully flat $R$-ring $S$. We can choose $S$ such that $B_S=B^s$ is the split Brown algebra, and $u\otimes1_S=\varphi(1_{B_S})$ for some $\varphi\in\bInv(Q)(S)$, by Remark \ref{Rsurj}. Then by Lemma \ref{Lconj}, $U_{u\otimes 1}$ is bijective and $u\otimes 1$ is conjugate invertible. Since the involution on $B_S$ is $S$-linear, we have $U_u\otimes \Id_S=U_{u\otimes 1}$, which thus shows that $U_u\otimes \Id_S$, and hence by descent $U_u$, is bijective.
 
 To finish the proof we will show that $v=U_u^{-1}(u)$ is a conjugate inverse to $u$. Indeed, let $x\in B$ be arbitrary and set 
 \[y_x=V_{u,v}(x)-x.\]
 We need to show that $y_x=0$, which is equivalent to $y_x\otimes 1_S=0$ for $S$ as before. Now
 \[y_x\otimes 1_S=V_{u,v}(x)\otimes 1_S-x\otimes 1_S=(V_{u,v}\otimes \Id_S)(x\otimes 1_S)-x\otimes 1_S\]
 and since the involution on $B_S$ is $S$-linear, this is equal to 
 \[V_{u\otimes 1_S,v\otimes 1_S}(x\otimes 1_S)-x\otimes 1_S.\]
 But $u\otimes 1_S$ is conjugate invertible with conjugate inverse $\widehat{u\otimes 1_S}$ satisfying
 \[U_{u\otimes 1_S}(\widehat{u\otimes 1_S})=V_{u\otimes 1_S,\widehat{u\otimes 1_S}}(u\otimes 1_S)=u\otimes 1_S,\]
 so $\widehat{u\otimes 1_S}=U_{u\otimes 1_S}^{-1}(u\otimes 1_S)$. Thus we are done if can show that $U_{u\otimes 1_S}^{-1}(u\otimes 1_S)=v\otimes 1_S$, and indeed
 \[U_{u\otimes 1_S}(v\otimes 1_S)=U_u(v)\otimes 1_S=U_uU_u^{-1}(u)\otimes 1_S=u\otimes 1_S,\]
 which completes the proof.
 \end{proof}

 Thus any element of $\bS_Q(R)$ is conjugate invertible, and $\widehat u:=U_u^{-1}(u)$ is a well-defined conjugate inverse of $u$. The next step is the following analogue of a result of \cite{Gar}.
 
 \begin{Lma}\label{Lisot} If $\varphi\in\bInv(Q)(R)$ satisfies $\varphi(1)=u$, then $\varphi$ is an isomorphism $B\to B^{(\widehat u)}$.
 \end{Lma}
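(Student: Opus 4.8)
The plan is to exhibit the given $\varphi$ as half of an isotopy from $B$ to itself, and then to invoke the observation recorded in the remark following the definition of principal isotopes: for any isotopy $(\varphi,\psi)$ in the sense of \eqref{Eisot}, the map $\varphi$ is an isomorphism of Brown algebras from $B$ onto the principal isotope $B^{(\psi(1_B))}$. Granting this, the entire proof collapses to producing a companion map $\psi$ with $\varphi(\{x,y,w\})=\{\varphi(x),\psi(y),\varphi(w)\}$ and then verifying the single equality $\psi(1_B)=\widehat u$.

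To build $\psi$, I would feed the invariance of $\varphi$ into the defining relation of $t_z$, mimicking the computation in the proof of Lemma \ref{Lconj} but with $z$ in place of $j$. Starting from $2\{x,zy,w\}=t_z(x,y,w)+b_z(y,w)x+b_z(y,x)w+b_z(x,w)y$ and applying $\varphi$, the facts that $\varphi$ preserves $t_z$ and $b_z$ (since $\varphi\in\bInv(Q)(R)$) together with $2\in R^*$ give
\[\varphi(\{x,zy,w\})=\{\varphi(x),z\varphi(y),\varphi(w)\}.\]
Since $z^2=\mu 1_B$ with $\mu\in R^*$, left multiplication $L_z$ by $z$ is invertible, with $L_z^2=\mu\,\Id$ (a local statement, checked after a faithfully flat base change making $B$ reduced and $z$ a scalar multiple of $j$). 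Substituting $y\mapsto L_z^{-1}(y)$ therefore yields an invertible linear map $\psi:=\mu^{-1}L_z\circ\varphi\circ L_z$ satisfying
\[\varphi(\{x,y,w\})=\{\varphi(x),\psi(y),\varphi(w)\}\]
for all $x,y,w\in B$; thus $(\varphi,\psi)$ is an isotopy from $B$ to $B$.

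Next I would identify $\psi(1_B)$. Putting $x=w=1_B$ and writing $u=\varphi(1_B)$, the isotopy relation reads $\varphi U_1=U_u\psi$, using $\{1,y,1\}=U_1(y)$ and $\{u,\psi(y),u\}=U_u(\psi(y))$. Now $U_1$ is invertible, acting as the identity on symmetric elements and as $-3\,\Id$ on skew-symmetric ones with $3\in R^*$ (by descent from the reduced case); hence $U_u=\varphi U_1\psi^{-1}$ is invertible and $\widehat u=U_u^{-1}(u)$ is defined. Evaluating $\varphi U_1=U_u\psi$ at $1_B$ and using the immediate identity $U_1(1_B)=\{1,1,1\}=1_B$ gives $u=\varphi(1_B)=U_u\psi(1_B)$, so that $\psi(1_B)=U_u^{-1}(u)=\widehat u$.

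Finally, the isotopy-to-isomorphism observation applied to $(\varphi,\psi)$ exhibits $\varphi$ as an isomorphism of Brown algebras onto the principal isotope defined by $\psi(1_B)=\widehat u$, which is exactly $B^{(\widehat u)}$; here the passage from circle products to the full algebra-with-involution structure is supplied by Remark \ref{Riso}. The main obstacle is concentrated in the second paragraph: transporting the intertwining relation from the reduced setting of Lemma \ref{Lconj} to an arbitrary Brown algebra $B$ with a general skew element $z$, and in particular justifying the invertibility of $L_z$ and hence of $\psi$ in the non-reduced case. Once the isotopy is in place, the identification $\psi(1_B)=\widehat u$ is a one-line consequence of $U_1(1_B)=1_B$.
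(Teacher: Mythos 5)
Your proof is correct, and it shares the paper's skeleton --- produce a companion map $\psi$ intertwining the triple product with $\varphi$, pin down $\psi(1_B)$ by evaluating at the unity, and pass from circle products to the full algebra-with-involution structure via Remark \ref{Riso} --- but it executes two steps genuinely differently. First, the paper proves the key intertwining identity only after a faithfully flat base change making $B$ reduced, and then imports $\psi(w)=j\varphi(jw)$ from the proof of Lemma \ref{Lconj}; you instead construct $\psi=\mu^{-1}L_z\circ\varphi\circ L_z$ globally over $R$ for the given skew element $z$, confining descent to the single identity $L_z^2=\mu\,\Id$ (which is indeed valid: locally $z$ maps to $\lambda j$ with $\lambda^2=\mu$, and $j(jx)=x$ by direct computation in $B^A$). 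Since $t_z$ is \emph{defined} by $t_z(x,y,w)=2\{x,zy,w\}-b_z(y,w)x-b_z(y,x)w-b_z(x,w)y$ and $\varphi$ preserves $t_z$ and $b_z$, your derivation of $\varphi\{x,y,w\}=\{\varphi(x),\psi(y),\varphi(w)\}$ over $R$ is sound, and it gives the full isotopy relation for arbitrary $(B,z)$ rather than only after base change. Second, where the paper identifies the relevant value by means of the isotope's operator $U'_u$ (asserting its bijectivity and using $U'_uu=u$ since $u$ is the unity of $B^{(\widehat u)}$), you stay entirely inside the original triple system: $\varphi U_1=U_u\psi$ with $U_1$ invertible (as $U_1(x)=2x^*-x$ and $2,3\in R^*$) yields $U_u$ invertible and $\psi(1_B)=U_u^{-1}(u)=\widehat u$, after which the paper's own motivational remark on isotopies --- whose computation also shows $V_{u,\psi(1_B)}=\Id$, so that $u$ really is the unity of $B^{(\widehat u)}$ and Remark \ref{Riso} applies --- finishes the argument. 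Your variant buys a more self-contained proof that avoids the paper's somewhat terse appeal to the isotope's $U'$-operator and to Lemma \ref{Lconj} after descent, at the modest cost of the extra local verification of $L_z^2=\mu\,\Id$; the paper's version is shorter precisely because it recycles Lemma \ref{Lconj} wholesale.
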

 
 \begin{proof} Since $u$ is the unity in  $B':=B^{(\widehat u)}$, it suffices, by Remark \ref{Riso}, to show  the identity
 \[\varphi U_{x,z}(1)=U'_{\varphi(x),\varphi(z)}\varphi(1).\]
 It suffices to check this after base change to a faithfully flat $R$-ring $S$, and we may choose $S$ so that $B=B^A$. Then by Lemma \ref{Lconj} and its proof,
 \[\varphi U_{x,z}(1)=U'_{\varphi(x),\varphi(z)}\psi(1)\]
for some invertible linear map $\psi$. To conclude that $\psi(1)=\varphi(1)$, set $x=z=1$. Then the left hand side is $\varphi(1)=u$. The right hand side is $U'_{u}\psi(1)$, where the operator $U'_{u}$ is a bijection that satisfies $U'_{u}u=u$, since $u$ is the unity of $B'$. This forces $\psi(1)=u$, as desired.
 \end{proof}
 
 The next result extends Corollary \ref{Cisom} beyond reduced Brown algebras.

\begin{Prp} Let $B$ be a Brown algebra with a skew-symmetric element $z$ with invertible square, and set $Q=Q(B,z)$. There is an isomorphism of group schemes $\bInv^1(Q)\to\bAut(B)^\circ$.
\end{Prp}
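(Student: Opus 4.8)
The proposition asserts an isomorphism of group schemes $\bInv^1(Q)\to\bAut(B)^\circ$, where $B$ is a possibly non-reduced Brown algebra with a skew-symmetric element $z$ of invertible square and $Q=Q(B,z)$. Let me think about what we have available.

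From Proposition \ref{Pstrong}, $\bInv^1(Q)$ is simply connected simple of type $\E_6$. So the target $\bAut(B)^\circ$ must also be type $\E_6$.

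The key structural insight from the preceding lemmas: elements $u\in\bS_Q(R)$ are conjugate invertible (Lemma \ref{Lconjinv}), and Lemma \ref{Lisot} says that if $\varphi\in\bInv(Q)(R)$ satisfies $\varphi(1)=u$, then $\varphi$ is an isomorphism $B\to B^{(\widehat u)}$.

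**The crucial special case.** If $\varphi\in\bInv^1(Q)(R)$, then $\varphi(1)=1$, so $u=1$. Then $\widehat u = U_1^{-1}(1) = 1$ (since $U_1$ fixes symmetric elements and $1$ is symmetric). So $B^{(\widehat u)} = B^{(1)} = B$! Thus by Lemma \ref{Lisot}, **every $\varphi\in\bInv^1(Q)(R)$ is an automorphism of $B$.**

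This is the heart of the matter. The map $\bInv^1(Q)\to\bAut(B)$ sends $\varphi\mapsto\varphi$ (restricting the FTS-automorphism to the algebra structure). By Lemma \ref{Lisot} applied with $u=1$, this lands in $\bAut(B)$, functorially in $S$.

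**Verify it's a group scheme homomorphism and injective.** This should be routine: the map is literally the identity on underlying linear maps, so it's a monomorphism of group schemes. An element of $\bInv^1(Q)$ preserving the FTS and fixing $1$ gives an algebra automorphism; conversely need to check an algebra automorphism gives an FTS automorphism fixing $1$.

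**Other direction: $\bAut(B)^\circ\to\bInv^1(Q)$.** Here's the main obstacle. Need: every algebra automorphism (at least in the identity component) preserves $(t_z, b_z)$ and hence lies in $\bInv(Q)$, and fixes $1$. An algebra automorphism $\varphi$ fixes $1$ automatically. But does it preserve $b_z, t_z$? The forms $b_z, t_z$ are defined from the triple product $\{,,\}$ and the element $z$. An algebra automorphism preserves $\{,,\}$ (it's built from multiplication and involution). But it need not fix $z$ — it sends $z$ to another skew-symmetric element $z'$ with $(z')^2 = z^2$. So $\varphi$ might map $Q(B,z)$ to $Q(B,z')$, a different-looking FTS. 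The identity-component restriction presumably kills the discrepancy: on $\bAut(B)^\circ$, the action on the rank-1 space of skew elements (via $\pi:\bAut(B)\to\bmu_2$) is trivial, so $\varphi(z)=z$.

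**My proof plan.** First I would construct $\bInv^1(Q)\to\bAut(B)$ as the restriction map, using Lemma \ref{Lisot} with $u=1$ to verify the image consists of algebra automorphisms. This already gives a monomorphism into $\bAut(B)$; since the source is connected it factors through $\bAut(B)^\circ$. Next I would produce the inverse on the level of $\bAut(B)^\circ$ by showing automorphisms in the identity component preserve $b_z$ and $t_z$. Concretely I would reduce to the split case by a faithfully flat base change (as in Proposition \ref{Pstrong}), where $B=B^A$ and $z=\lambda j$, and use that $\pi$ from the earlier remark measures the action on $z$; connectedness forces $\varphi(z)=z$, hence $\varphi$ preserves $b_z,t_z$ and fixes $1$, giving $\varphi\in\bInv^1(Q)(S)$. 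Since both the map and its proposed inverse are the identity on underlying linear maps, they are mutually inverse. Finally, since both schemes are smooth and connected of the same dimension, I would invoke \cite[$_4$ 17.9.5]{EGAIV} and descent to promote this to an isomorphism over $R$.

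The step I expect to be the main obstacle is showing that automorphisms in $\bAut(B)^\circ$ fix $z$ (equivalently preserve $b_z$ and $t_z$) over a general ring; the cleanest route is to reduce to the split reduced case, where $z$ is a scalar multiple of $j$ and connectedness of the identity component forces the $\bmu_2$-valued character $\pi$ to be trivial, so $z$ is fixed.
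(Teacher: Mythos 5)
Your proposal is correct, and its first half coincides with the paper's: both use Lemma \ref{Lisot} with $u=1$ (so that $\widehat u=U_1^{-1}(1)=1$ and $B^{(1)}=B$ by Remark \ref{Riso}) to get a homomorphism $\bInv^1(Q)\to\bAut(B)$ that factors through $\bAut(B)^\circ$ by connectedness. Where you genuinely diverge is in proving this map is an isomorphism. The paper does not construct an inverse: it uses smoothness of $\bInv^1(Q)$, the fibrewise criterion \cite[$_4$ 17.9.5]{EGAIV} and descent to reduce to an algebraically closed field, where $B$ is split, hence reduced, and then both groups are identified with $\bIsom(N)$ (as the common subgroup $\{\widehat\rho\}$ of $\bGL(B)$) by Theorem \ref{TG1} and Corollary \ref{Cisom}. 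You instead prove the reverse containment functorially on $S$-points: any $\gamma\in\bAut(B)^\circ(S)$ preserves the triple product (built from multiplication and involution) and, after fppf reduction to the split case where $z$ becomes $\lambda j$, fixes $z$ because the character $\pi$ is trivial on the identity component; since $b_z$ and $t_z$ are linear in $z$ and built from the triple product, $\gamma$ preserves them (and $\Psi$, via Remark \ref{Rsix}), so $\gamma\in\bInv^1(Q)(S)$, giving equality of subfunctors of $\bGL(B)$. Your route is more self-contained at the level of points and makes the geometric content explicit (the discrepancy between $Q(B,z)$ and $Q(B,z')$ is killed exactly by restricting to the identity component, the same $\pi$-idea underlying Proposition \ref{Pincl}); its cost is that you must know that formation of $\bAut(B)^\circ$ commutes with the faithfully flat base change and that homomorphisms from fibrewise-connected groups to $\bmu_2$ vanish, facts the paper's fibral reduction sidesteps by only ever working over geometric fibres where Corollary \ref{Cisom} applies verbatim. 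One small correction: your final appeal to \cite[$_4$ 17.9.5]{EGAIV} ``since both schemes are smooth and connected of the same dimension'' is both a misstatement of that criterion (it is a fibrewise-isomorphism criterion for flat, finitely presented morphisms, not a dimension count) and redundant in your setup, since once the two maps are shown to be mutually inverse identity maps on underlying linear maps, the subgroup schemes coincide and nothing further is needed.
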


\begin{proof} From Lemma \ref{Lisot} it follows that the inclusion $\bInv(Q)\to \bGL(B)$ induces a homomorphism $\bInv^1(Q)\to\bAut(B)$, which factors through $\bAut(B)^\circ$ since $\bInv^1(Q)$ is connected. To show that it is an isomorphism we may, since $\bInv^1(Q)$  is smooth, use \cite[$_4$ 17.9.5]{EGAIV} and descent to reduce to the case where $R=k$ is an algebraically closed field. In that case, $B$ is split, hence reduced, and we conclude with Theorem \ref{TG1} and Corollary \ref{Cisom}.
 
\end{proof}

\section{Torsors and twists}
In this section we assume that $6\in R^*$. We consider a Brown algebra $B$ containing a skew-symmetric element $z$ with invertible square, and the corresponding FTS $Q=Q(B,z)$. Proposition \ref{Pquot} has the following immediate consequence.

\begin{Cor} The map $\Pi:\bInv(Q)\to \bS_Q$ defines an $\bInv^1(Q)$-torsor $\bE$ over $\bS_Q$ with respect to the fppf-topology.
\end{Cor}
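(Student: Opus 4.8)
The plan is to realize $\Pi$ as the quotient map for a free action of $\bInv^1(Q)$ and then invoke the standard fact that a faithfully flat quotient by such an action is a torsor. First I would let $\bInv^1(Q)$ act on $\bInv(Q)$ by right multiplication: for an $R$-ring $S$, a point $\rho\in\bInv(Q)(S)$ and $h\in\bInv^1(Q)(S)$, the composite $\rho\circ h$ again lies in $\bInv(Q)(S)$, and this defines a right action. It preserves $\Pi$, since
\[\Pi(\rho\circ h)=(\rho\circ h)(1_B)=\rho(h(1_B))=\rho(1_B)=\Pi(\rho),\]
using that $h$ fixes $1_B$. Thus $\Pi$ is constant on the $\bInv^1(Q)$-orbits, and by Proposition~\ref{Pquot} it identifies $\bS_Q$ with the fppf-quotient $\bInv(Q)/\bInv^1(Q)$.

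Next I would verify the two defining properties of an fppf-torsor. The action is free: right translation of the full group $\bInv(Q)$ on itself is free, and restricting to the subgroup $\bInv^1(Q)$ preserves freeness; concretely, the map
\[\bInv(Q)\times_R\bInv^1(Q)\to\bInv(Q)\times_{\bS_Q}\bInv(Q),\qquad (\rho,h)\mapsto(\rho,\rho\circ h),\]
is a monomorphism, as one recovers $h$ from $\rho$ and $\rho\circ h$. For faithful flatness of $\Pi$, I would appeal to the smoothness already established in the proof of Proposition~\ref{Pquot}: since $\bInv(Q)$ and $\bInv^1(Q)$ are smooth (the latter being simply connected simple of type $\E_6$ by Proposition~\ref{Pstrong}) and the quotient $\bS_Q$ is smooth, the quotient morphism $\Pi$ is faithfully flat by \cite[VIB.9.2]{SGA3}.

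Finally, the general theory of quotients by flat affine subgroup schemes (see \cite[III.3]{DG}) shows that a faithfully flat quotient morphism by a free action is a torsor for the fppf-topology; equivalently, the monomorphism above upgrades to an isomorphism
\[\bInv(Q)\times_{\bS_Q}\bInv(Q)\;\simeq\;\bInv(Q)\times_R\bInv^1(Q).\]
The cover $\Pi\co\bInv(Q)\to\bS_Q$ then trivializes $\bE$ over itself via the tautological diagonal section, exhibiting $\bE=\bInv(Q)$ as an $\bInv^1(Q)$-torsor over $\bS_Q$. The only point requiring genuine care is the faithful flatness of $\Pi$, but this is furnished directly by the smoothness of all three schemes recorded above; the remaining assertions are formal consequences of Proposition~\ref{Pquot} and the freeness of right translation.
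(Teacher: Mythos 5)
Your proposal is correct and follows essentially the same route as the paper: the paper gives no separate argument, stating the corollary as an immediate consequence of Proposition~\ref{Pquot}, since once $\bS_Q$ is identified with the fppf-quotient $\bInv(Q)/\bInv^1(Q)$, the projection is an $\bInv^1(Q)$-torsor by the standard theory of quotients by flat subgroup schemes (\cite[VIB.9.2]{SGA3}, \cite[III.3]{DG}). Your writeup simply makes those standard details explicit (freeness of right translation, the fibre-product isomorphism, faithful flatness via smoothness), all of which are sound.
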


For each $u\in\bS_Q(R)$, we denote the fibre in $\bE$ by $\bE^u$. This is an $\bAut(B)^\circ$-torsor over $\Spec R$ with respect to the fppf-topology. Our next result concerns the twist of $B$ by $\bE^u$, denoted $\bE^u\wedge B$. We recall the definition step-by-step. For each $R$-ring $S$, consider the set $(\bE^u(S)\times B_S)/\sim$, where the equivalence relation $\sim$ on $\bE^u(S)\times B_S$ is defined by $(\varphi,x)\sim (\psi,y)$ if and only if
\[\exists \gamma\in \bAut(B)^\circ(S) : (\varphi\gamma,x)= (\psi,\gamma(y)).\]
We will denote the equivalence class of $(\varphi,x)\in\bE^u(S)\times B_S$ by $[\varphi,x]$. This quotient set is an $S$-algebra, under the addition
\[ [\varphi,x] + [\psi,y]=[\varphi,x+ \varphi^{-1}\psi(y)],\]
multiplication
\[ [\varphi,x]\cdot[\psi,y]=[\varphi,x\cdot \varphi^{-1}\psi(y)],\]
and $S$-action
\[ \lambda[\varphi,x] =[\varphi,\lambda x]\]
for all $\varphi,\psi\in \pi^{-1}(1_S\otimes u)$, $x,y\in B_S$ and $\lambda\in S$. (Note that the addition and multiplication are understood by observing that the definition of $\sim$ implies that $[\psi,y]=[\varphi,\varphi^{-1}\psi(y)]$, and then defining the operations on the second component after having fixed the first.)

The assignment of the $S$-algebra $(\bE^u(S)\times B_S)/\sim$ to each $R$-ring $S$ defines a presheaf of algebras over $R$. We then let $\bE^u\wedge B$ be the associated sheaf. We now show that these twists are isotopes of $B$.

\begin{Thm}\label{Tiso}
 Let $u\in\bS_Q(R)$. For each $R$-ring $S$ such that $\bE^u(S)\neq\emptyset$, the map 
 \[\begin{array}{cc}\Theta_S:\bE^u(S)\wedge B_S\to B_S^{(\hat u)}, & [\varphi,x]\mapsto \varphi(x)\end{array}\]
 is an isomorphism of $S$-algebras. Thus $(\bE^u\wedge B)(R)$ is canonically isomorphic to $B^{(\hat u)}$.
 \end{Thm}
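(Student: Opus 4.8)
The plan is to verify that $\Theta_S$ is a well-defined bijective $S$-algebra homomorphism for each $R$-ring $S$ with $\bE^u(S)\neq\emptyset$, and then to deduce the statement about $(\bE^u\wedge B)(R)$ by a sheafification/descent argument. First I would check well-definedness: if $(\varphi,x)\sim(\psi,y)$, then there is $\gamma\in\bAut(B)^\circ(S)$ with $\varphi\gamma=\psi$ and $x=\gamma(y)$, so $\varphi(x)=\varphi\gamma(y)=\psi(y)$, which shows $\Theta_S$ does not depend on the representative. Next I would check that $\Theta_S$ respects the three operations defining the algebra structure on $\bE^u(S)\wedge B_S$. For the $S$-action this is immediate: $\Theta_S(\lambda[\varphi,x])=\Theta_S([\varphi,\lambda x])=\varphi(\lambda x)=\lambda\varphi(x)$, using $S$-linearity of $\varphi$. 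For addition, $\Theta_S([\varphi,x]+[\psi,y])=\varphi(x+\varphi^{-1}\psi(y))=\varphi(x)+\psi(y)$, as desired. The multiplicative case is the crucial one and is where the isotope structure enters.

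For multiplication, the computation gives
\[\Theta_S([\varphi,x]\cdot[\psi,y])=\varphi\bigl(x\cdot\varphi^{-1}\psi(y)\bigr),\]
where the product $\cdot$ on the right is taken in $B_S$. The target requires this to equal $\varphi(x)\circ_{\widehat u}\psi(y)$, the product in the isotope $B_S^{(\widehat u)}$. The key input is Lemma \ref{Lisot}: since $\varphi\in\bE^u(S)$ means $\varphi\in\bInv(Q)(S)$ with $\varphi(1_{B_S})=u$, that lemma says $\varphi:B_S\to B_S^{(\widehat u)}$ is an algebra isomorphism. Hence $\varphi(b\cdot b')=\varphi(b)\circ_{\widehat u}\varphi(b')$ for all $b,b'\in B_S$. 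Applying this with $b=x$ and $b'=\varphi^{-1}\psi(y)$ yields
\[\varphi\bigl(x\cdot\varphi^{-1}\psi(y)\bigr)=\varphi(x)\circ_{\widehat u}\varphi\varphi^{-1}\psi(y)=\varphi(x)\circ_{\widehat u}\psi(y),\]
which is exactly the product of $\Theta_S([\varphi,x])$ and $\Theta_S([\psi,y])$ in $B_S^{(\widehat u)}$. Thus $\Theta_S$ is an algebra homomorphism.

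For bijectivity I would exhibit the inverse directly: fixing any $\varphi_0\in\bE^u(S)$ (nonempty by hypothesis), the map $B_S^{(\widehat u)}\to\bE^u(S)\wedge B_S$ sending $b\mapsto[\varphi_0,\varphi_0^{-1}(b)]$ is a two-sided inverse of $\Theta_S$, since every class has a representative with first component $\varphi_0$ (using that $\bAut(B)^\circ(S)=\bInv^1(Q)(S)$ acts transitively on the fibre $\bE^u(S)$ over $1_S\otimes u$). I expect the main obstacle to be the final sentence rather than the homomorphism check: passing from the presheaf $S\mapsto(\bE^u(S)\times B_S)/\sim$ to its sheafification $\bE^u\wedge B$ and identifying $(\bE^u\wedge B)(R)$ with $B^{(\widehat u)}$. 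The isomorphisms $\Theta_S$ are compatible with base change along $R$-rings, so they assemble into a morphism of presheaves from $\bE^u\wedge B$ (before sheafification) to the constant-by-base-change presheaf $S\mapsto B_S^{(\widehat u)}$; since the target is already an fppf sheaf (being represented by the module $B$ with its isotope structure), this morphism factors through the sheafification and induces the claimed canonical isomorphism on $R$-points. The care needed here is to confirm that $\Theta$ is genuinely a morphism of sheaves, i.e.\ that the $\Theta_S$ are natural in $S$, which follows because $\varphi(x)$ commutes with base change; the word ``canonically'' is justified by this naturality, independent of the auxiliary choice of $\varphi_0$.
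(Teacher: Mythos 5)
Your proposal is correct and follows essentially the same route as the paper: the key input in both is Lemma \ref{Lisot}, which makes each $\varphi\in\bE^u(S)$ an isomorphism $B_S\to B_S^{(\hat u)}$ and hence gives multiplicativity of $\Theta_S$, with the final identification of $(\bE^u\wedge B)(R)$ obtained by sheafification. Your additional checks --- well-definedness on equivalence classes and the explicit inverse $b\mapsto[\varphi_0,\varphi_0^{-1}(b)]$ via the simple transitivity of $\bInv^1(Q)(S)=\bAut(B)^\circ(S)$ on the fibre --- are details the paper leaves implicit, and they are carried out correctly.
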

 
 Here $\hat u=U_u^{-1}(u)$ is conjugate inverse to $u$, as in the proof of Lemma \ref{Lconjinv}.

 \begin{proof} If $\varphi\in\bE^u(S)$, then $\varphi\in\bInv(Q)(S)$ with $\varphi(1)=u$. Thus by Lemma \ref{Lisot}, $\varphi$ is an isomorphism from $B$ to $B^{(\widehat u)}$. Hence
 \[\Theta_S([\varphi,x][\varphi,y])=\Theta_S([\varphi,xy])=\varphi(xy)=\varphi(x)\bullet \varphi(y)=\Theta_S([\varphi,x])\bullet \Theta_S([\varphi,y]), \]
 where $\bullet$ denotes the multiplication in $B^{(\widehat u)}$. Since also
 \[\Theta_S([\varphi,x]+[\varphi,y])=\Theta_S([\varphi,x+y])=\varphi(x+y)=\varphi(x)+ \varphi(y)\]
 and
 \[\Theta_S(\lambda[\varphi,x])=\Theta_S([\varphi,\lambda x])=\varphi(\lambda x)=\lambda\varphi(x), \]
 the map $\Theta_S$ is an isomorphism. The claim follows by sheafification.
 \end{proof}

\subsection{Non-isomorphic isotopes}

The results of the previous sections help answer the question of whether Brown algebras can have non-isomorphic isotopes in the affirmative, even in the case of the split Brown algebra. This follows from the following.
 
 \begin{Prp} Let $A$ be the (split) complex Albert algebra and set $B=B^A$ and  $Q=Q^A$. Then the $\bAut(B)^\circ$-torsor $\bE\to\bS_Q$ is non-trivial.
  
 \end{Prp}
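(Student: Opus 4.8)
The plan is to pass to complex points and extract a topological obstruction from the rational cohomology of the groups involved. Recall that $\bE$ is the total space $\bInv(Q)$ with projection $\Pi\co\rho\mapsto\rho(1_B)$, and that its structure group is $\bAut(B)^\circ\simeq\bInv^1(Q)$. A torsor is trivial precisely when it admits a global section, so I would argue by contradiction: suppose there is a section $s\co\bS_Q\to\bInv(Q)$ with $\Pi\circ s=\Id$. Normalizing $s$ by right translation so that $s(1_B)$ is the identity, the assignment $(h,y)\mapsto s(y)\cdot h$ gives a trivialization $\bInv^1(Q)\times\bS_Q\simlgr\bInv(Q)$, whose inverse composed with the first projection yields a morphism $r\co\bInv(Q)\to\bInv^1(Q)$ restricting to the identity on the fibre $\Pi^{-1}(1_B)=\bInv^1(Q)$. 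In other words, $r$ is a retraction of the subgroup inclusion $\iota\co\bInv^1(Q)\hookrightarrow\bInv(Q)$.

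Analytifying over $\C$, the map $\iota^*\co H^*(\bInv(Q)(\C);\Q)\to H^*(\bInv^1(Q)(\C);\Q)$ then admits the right inverse $r^*$ and is therefore surjective. I would contradict this in a single degree. By Proposition \ref{PGPR} and Proposition \ref{Pstrong}, $\bInv(Q)$ is simply connected of type $\E_7$ and $\bInv^1(Q)$ is simply connected of type $\E_6$; the complex points of each are homotopy equivalent to the compact form, so the rational cohomology is an exterior algebra on primitive generators in degrees $2e_i+1$ for the exponents $e_i$. For $\E_7$ the exponents are $1,5,7,9,11,13,17$, giving generators in degrees $3,11,15,19,23,27,35$, whence $H^9(\bInv(Q)(\C);\Q)=0$, because no generator and no product of distinct generators lands in degree $9$ (the lowest product sits in degree $3+11=14$). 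For $\E_6$ the exponents are $1,4,5,7,8,11$, giving a primitive generator in degree $9$, so that $H^9(\bInv^1(Q)(\C);\Q)\neq0$. A surjection $0\to H^9(\bInv^1(Q)(\C);\Q)$ is impossible, so no section exists and $\bE\to\bS_Q$ is non-trivial.

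The heart of the argument is the mismatch of exponents: $\E_6$ carries a primitive cohomology class in degree $9$ that $\E_7$ simply does not see. The step requiring the most care is the reduction to a retraction: one must verify that a scheme-theoretic section of $\Pi$ really does identify the fibre inclusion with the subgroup inclusion $\iota$ after the translation normalization, so that the retraction $r$---and hence the splitting of $\iota^*$---is genuine rather than merely fppf-local. Once this is in place, the cohomological contradiction is immediate and uses nothing about the geometry of $\bS_Q$ beyond the identification of its structure group and total space.
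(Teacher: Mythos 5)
Your argument is correct, and it reaches the paper's conclusion by a genuinely different technical route while sharing the same skeleton: pass to complex points and contradict triviality with a degree-$9$ invariant that $\E_6$ has and $\E_7$ lacks. The paper gets there in two lines by citation: it invokes \cite[Lemma 1.4]{Als}, which reduces triviality of the torsor to whether $\pi_n(\bAut(B)^\circ(\C))$ is a direct summand of $\pi_n(\bInv(Q)(\C))$, and then uses the Cartan decomposition plus Bott--Samelson \cite{BS}, namely $\pi_9$ of compact $\E_7$ is trivial while $\pi_9$ of compact $\E_6$ is $\Z$. You instead make the reduction self-contained: a scheme-theoretic section $s$ of $\Pi$, normalized by right translation by $s(1_B)^{-1}$ (legitimate, since $s(1_B)\in\Pi^{-1}(1_B)=\bInv^1(Q)(\C)$, so the translate is still a section), yields the retraction $r(\rho)=s(\Pi(\rho))^{-1}\rho$, which is a morphism because multiplication and inversion are, and restricts to the identity on the fibre since $r(h)=s(1_B)^{-1}h=h$; this is in effect a direct proof, at the level of rational cohomology, of the special case of \cite[Lemma 1.4]{Als} that the paper needs. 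You then replace the homotopy-group input by the classical exterior-algebra description of $H^*(G;\Q)$ via the exponents, and your degree lists are right: $\E_7$ has primitive generators in degrees $3,11,15,19,23,27,35$, so $H^9=0$ (the lowest decomposable class sits in degree $3+11=14$), while $\E_6$, with exponents $1,4,5,7,8,11$, has a primitive class in degree $9$, so the splitting of $\iota^*$ is impossible. What your route buys is independence from both the external lemma and the finer integral homotopy computations of \cite{BS} --- the rational cohomology of compact Lie groups is completely standard --- at the cost of writing out the section-to-retraction step, which you handle correctly; what the paper's route buys is brevity, and its homotopy-theoretic formulation plugs directly into the torsor framework of \cite{Als} and \cite{Gil1} used elsewhere in this programme. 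The underlying obstruction is of course the same in both proofs: rationally, $\pi_9\otimes\Q$ detects exactly the degree-$9$ primitive generator coming from the exponent $4$ of $\E_6$, which has no counterpart among the exponents of $\E_7$.
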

 
 \begin{proof} By  \cite[Lemma 1.4]{Als}, inspired by \cite{Gil1}, it is enough to show that the homotopy group $\pi_n(\bAut(B)^\circ(\mathbb C))$, for some $n$, is not a direct summand of $\pi_n(\bInv(Q)(\mathbb C))$. By Cartan decomposition, $\bAut(B)^\circ(\mathbb C)$ is homeomorphic to $\mathbb R^l\times H$, where $H$ is the compact real Lie group of type $E_6$, and $\bInv(Q)^\circ(\mathbb C)$ is homeomorphic to $\mathbb R^m\times G$, where $G$ is the compact real Lie group of type $E_7$, and $l,m\in\mathbb N$. From \cite{BS} we know that $\pi_9(G)$ is trivial, while $\pi_9(H)\simeq\mathbb Z$. This completes the proof.
\end{proof}

A consequence of this is the following.

\begin{Cor} There exists a smooth $\C$-ring $R$ such that the split Brown algebra over $R$ admits a non-isomorphic isotope.
\end{Cor}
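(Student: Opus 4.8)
The plan is to deduce the corollary from the preceding proposition by a specialization (or base-change) argument, converting the non-triviality of the $\bAut(B)^\circ$-torsor $\bE\to\bS_Q$ over $\C$ into the existence of a single $R$-point exhibiting a non-isomorphic isotope over a suitable smooth $\C$-ring $R$. First I would recall that by Theorem \ref{Tiso}, for each $u\in\bS_Q(R)$ the twist $\bE^u\wedge B$ is canonically the isotope $B^{(\widehat u)}$, so that the isomorphism class of $B^{(\widehat u)}$ as an $R$-algebra with involution is governed by the class of the fibre torsor $\bE^u$ in $H^1_{\mathrm{fppf}}(R,\bAut(B)^\circ)$. The isotope $B^{(\widehat u)}$ is isomorphic to $B$ precisely when $\bE^u$ is a trivial torsor, i.e.\ when $\bE^u(R)\neq\emptyset$ (equivalently, when $u$ lifts to an honest $R$-point of $\bInv(Q)$ via $\Pi$). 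Thus producing a non-isomorphic isotope is equivalent to producing an $R$-point $u\in\bS_Q(R)$ whose fibre torsor $\bE^u$ is nontrivial.

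Next I would exploit the fact that the total space $\bE=\bInv(Q)$ is nontrivial as a torsor over the base $\bS_Q$, as established in the proposition. The natural candidate for $R$ is the coordinate ring of the base itself: take $R=\C[\bS_Q]$ (the ring of global functions on an affine open of $\bS_Q$, or a localization thereof making it smooth and affine), and let $u\in\bS_Q(R)$ be the tautological (generic) point corresponding to $\mathrm{Id}_{\bS_Q}$. By construction the fibre torsor $\bE^u$ over $\Spec R$ is the restriction of the universal torsor $\bE\to\bS_Q$ along this tautological point, which is the universal torsor itself. Since $\bE\to\bS_Q$ is nontrivial as an fppf torsor, its pullback along the identity cannot be trivial, so $\bE^u$ is a nontrivial $\bAut(B)^\circ$-torsor over $\Spec R$; hence $B^{(\widehat u)}$ is not isomorphic to $B^s\otimes_\C R$.

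The main obstacle I anticipate is bookkeeping the passage from the \emph{topological} nontriviality statement (the homotopy-group obstruction in the proposition, phrased over $\C$ as a Lie group) to an \emph{algebraic} nontriviality of the torsor over a genuinely smooth affine $\C$-ring $R$. Specifically, $\bS_Q$ is smooth (Proposition \ref{Pquot}) but not affine, so I must either replace it by a suitable smooth affine open, or argue that the restriction of $\bE$ to such an open already has a nontrivial class in fppf cohomology. The cleanest route is to invoke \cite[Lemma 1.4]{Als} once more: the homotopy obstruction rules out \emph{any} algebraic section of $\bE\to\bS_Q$ over $\bS_Q(\C)$ generically, so after restricting to an affine open $U\subseteq\bS_Q$ over which the generic point lives and setting $R=\C[U]$, the tautological point $u$ fails to lift, whence $\bE^u$ is nontrivial. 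I would close by noting that $R$ is smooth over $\C$ because $\bS_Q$ is, and that the existence of the non-isomorphic isotope then follows immediately from Theorem \ref{Tiso} and the triviality criterion above.
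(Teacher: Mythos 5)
Your overall strategy --- take $R$ to be the coordinate ring of $\bS_Q$, use the tautological point $u$, note that non-triviality of $\bE\to\bS_Q$ means exactly that $u$ does not lift along $\Pi$, and conclude via Theorem \ref{Tiso} --- is the same skeleton as the paper's proof. But there is a genuine gap at the pivotal step: you assert that ``$B^{(\widehat u)}$ is isomorphic to $B$ precisely when $\bE^u$ is a trivial torsor.'' Only the ``if'' direction is automatic. The twist $\bE^u\wedge B$ is a form of $B$ classified by its image in $H^1_{\mathrm{fppf}}(\bAut(B))$, i.e.\ by cohomology of the \emph{full} automorphism group, not of $\bAut(B)^\circ\simeq\bIsom(N)$. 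Since $\bAut(B)\simeq\bIsom(N)\rtimes\bmu_2$ is strictly larger than its identity component, a non-trivial class in $H^1(\bAut(B)^\circ)$ could a priori die under $H^1(\bAut(B)^\circ)\to H^1(\bAut(B))$, in which case the isotope would be isomorphic to $B$ even though $\bE^u$ has no section. This is exactly the point the paper's proof takes care of: if $B'\simeq B^s$, then the class $\eta=[\bE^u]$ lies in the kernel of $\iota^*\co H^1_{\mathrm{fppf}}(\bIsom(N))\to H^1_{\mathrm{fppf}}(\bAut(B))$, and that kernel is shown to be trivial in the proof of Corollary \ref{Cisom}, using Gille's orbit description of the kernel together with the splitting of the sequence $\mathbf{1}\to\bIsom(N)\to\bAut(B)\to\bmu_2\to\mathbf{1}$ from Proposition \ref{Pincl} (which makes $\bX(R)$ a single orbit). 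Your argument is rescuable, but only by adding precisely this step; as written, the implication ``$\bE^u$ non-trivial $\Rightarrow$ $B^{(\widehat u)}\not\simeq B$'' is unjustified, and it is the one delicate point of the corollary.

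A secondary issue: your anticipated obstacle about $\bS_Q$ failing to be affine is unfounded, and your proposed workaround would itself be gappy. The scheme $\bS_Q$ is the closed subscheme of the affine space underlying $B$ cut out by the equation $q(x)=1$, hence affine, so one may take $R=\C[\bS_Q]$ outright, as the paper does (smoothness coming from Proposition \ref{Pquot}). Had a restriction to an affine open $U\subsetneq\bS_Q$ actually been needed, the topological obstruction from the proposition (the $\pi_9$ comparison via \cite[Lemma 1.4]{Als}) only rules out sections over all of $\bS_Q$; a non-trivial torsor can trivialize after restriction to an open subscheme, so the claim that the tautological point of $U$ ``fails to lift'' would not follow from the proposition without further argument.
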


\begin{proof} Let $R$ be the coordinate ring of $\bS_Q$, where $Q=Q^s$ is the (split) complex FTS. By Proposition \ref{Pquot}, this is a smooth $\C$-ring. Let $N$ be the cubic norm of the split Albert algebra $A^s$ over $R$, and let $K$ be the kernel of the map
\[H_{\mathrm{fppf}}^1(\bIsom(N))\to H_{\mathrm{fppf}}^1(\bInv(Q))\]
induced by the inclusion $\bIsom(N)\to\bInv(Q)$. By \cite[Proposition 2.4.3]{Gil2}, $K$ is in bijection with the $\bInv(Q)(R)$-orbits of $\bS_Q(R)$. By the non-triviality of the torsor from the previous proposition, $K$ contains a non-trivial element $\eta$ corresponding to an element $u\in\bS_Q(R)$ that is not in the orbit of $1_{B^s}$. Let $B'$ be the isotope of $B^s$ defined by $\widehat u$.  If $B'\simeq B^s$, then $\eta$ is in the kernel of the map
 \[H_{\mathrm{fppf}}^1(\bIsom(N))\to H_{\mathrm{fppf}}^1(\bAut(B))\]
induced by the map $\iota$ from Proposition \ref{Pincl}. By the proof of Corollary \ref{Cisom}, this kernel is trivial. Thus $B'\not\simeq B^s$.
\end{proof}

\end{document}